\DeclareMathOperator{\Ker}{Ker}
\DeclareMathOperator{\supp}{supp}
\newcommand{\abs}[1]{\lvert#1\rvert}
\newcommand{\norm}[1]{\lVert#1\rVert}
\newcommand{\Norm}[1]{\left\lVert#1\right\rVert}
\newcommand{\ls}{\le C}
\newcommand{\bbR}{{\mathbb R}}
\newcommand{\bbN}{{\mathbb N}}
\newcommand{\bbZ}{{\mathbb Z}}
\newcommand{\wh}{\widehat}
\newcommand{\ba}{{\mathbf a}}
\newcommand{\bb}{{\mathbf b}}
\newcommand{\bM}{{\mathbf M}}
\newcommand{\bH}{{\mathbf H}}
\newcommand{\bw}{{\mathbf w}}
\newcommand{\bk}{{\mathbf k}}
\newcommand{\bz}{{\bm{\zeta}}}
\newcommand{\calH}{{\mathcal H}}
\newcommand{\calL}{\mathcal{L}}
\newcommand{\calN}{\mathcal{N}}
\newcommand{\Sch}{\mathbf{S}}
\DeclareFontFamily{U}{mathx}{\hyphenchar\font45}
\DeclareFontShape{U}{mathx}{m}{n}{<5> <6> <7> <8> <9> <10>
<10.95> <12> <14.4> <17.28> <20.74> <24.88> mathx10}{}
\DeclareSymbolFont{mathx}{U}{mathx}{m}{n}
\DeclareMathAccent{\widecheck}{0}{mathx}{"71}
\numberwithin{equation}{section}
\theoremstyle{plain}
\newtheorem{theorem}{\bf Theorem}[section]
\newtheorem*{theoremA}{Theorem A}
\newtheorem{lemma}[theorem]{\bf Lemma}
\theoremstyle{definition}
\theoremstyle{remark}
\newtheorem*{remark*}{\bf Remark}
\newcommand{\wt}{\widetilde}
\newcommand{\loc}{\mathrm{loc}}
\newcommand{\1}{\mathbbm{1}}
\newcommand{\bh}{{\mathbf{h}}}
\newcommand{\bbf}{{\mathbf{f}}}
\renewcommand{\[}{\begin{equation}}
\renewcommand{\]}{\end{equation}}
\begin{document}
\title{A Helson matrix with explicit eigenvalue asymptotics}
\date{\today}

\author{Nazar Miheisi \and Alexander Pushnitski}

\address{Department of Mathematics,
King's College London,
Strand, London WC2R 2LS,
United Kingdom}

\email{nazar.miheisi@kcl.ac.uk}

\email{alexander.pushnitski@kcl.ac.uk}

\subjclass[2010]{47B32, 47B35}

\keywords{Hankel matrix, Helson matrix, spectral asymptotics, Schatten class}

\begin{abstract}
A Helson matrix (also known as a multiplicative Hankel matrix) is an
infinite matrix with entries $\{a(jk)\}$ for $j,k\geq1$. Here the $(j,k)$'th term depends
on the product $jk$. We study a self-adjoint Helson matrix for a particular sequence
$a(j)=(\sqrt{j}\log j(\log\log j)^\alpha))^{-1}$,
$j\geq 3$, where $\alpha>0$,
and prove that it is compact and that its eigenvalues obey the asymptotics
$\lambda_n\sim\varkappa(\alpha)/n^\alpha$ as $n\to\infty$, with an explicit
constant $\varkappa(\alpha)$.
We also establish some intermediate results (of an independent interest)
which give a connection between the spectral properties of a Helson matrix and
those of its continuous analogue, which we call the integral Helson operator.
\end{abstract}

\maketitle

\section{Introduction}\label{sec.a}

\subsection{Background: Hankel matrices}
We start our discussion by recalling relevant facts from the theory of Hankel matrices.
Let $\{b(j)\}_{j=0}^\infty$ be a sequence of complex numbers.
A \emph{Hankel matrix} is an infinite matrix of the form
$$
H(b)=\{b(j+k)\}_{j,k=0}^\infty,
$$
considered as a linear operator in $\ell^2(\bbZ_+)$, $\bbZ_+=\{0,1,2,\dots\}$.
One of the key examples of Hankel matrices is the \emph{Hilbert matrix},
which corresponds to the choice $b(j)=1/(j+1)$.
It is well known that the Hilbert matrix is bounded (but not compact).
From the boundedness of the Hilbert matrix by a simple argument one obtains
$$
b(j)=o(1/j), \quad j\to\infty \quad \Rightarrow \quad H(b) \text{ is compact.}
$$
A natural family of compact self-adjoint Hankel operators of this class was
considered in \cite{PY1}.
To state this result, we need some notation.
For a compact self-adjoint operator $A$, let us denote by $\{\lambda_n^+(A)\}_{n=1}^\infty$ the
non-increasing sequence of positive eigenvalues (enumerated with multiplicities taken into account),
and let $\lambda_n^-(A)=\lambda_n^+(-A)$.

\begin{theoremA}\cite{PY1}
Let $b(j)$ be a sequence of real numbers defined by
$$
b(j)=1/(j(\log j)^\alpha),\quad j\geq2;
$$
the choice of $b(0)$ and $b(1)$ (or of any finite number of $b(j)$) is not important.
Then the eigenvalues of the Hankel  matrix $H(b)$ have the asymptotic behaviour
\[
\lambda_n^+(H(b))=\frac{\varkappa(\alpha)}{n^\alpha}+o(n^{-\alpha}), \quad
\lambda_n^-(H(b))=O(n^{-\alpha-1}),
\quad n\to\infty,
\label{a0}
\]
where $\varkappa(\alpha)$ is an explicit coefficient:
\[
\varkappa(\alpha)=2^{-\alpha}\pi^{1-2\alpha}B(\tfrac1{2\alpha},\tfrac12)^\alpha,
\label{a1}
\]
and $B(\cdot,\cdot)$ is the standard Beta function.
\end{theoremA}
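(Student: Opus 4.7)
My approach is to bridge the discrete Hankel matrix $H(b)$ to its continuous analogue, the integral Hankel operator $\bH(h)$ on $L^2(\bbR_+)$ with kernel $h(x+y)$, and then to read off the spectral asymptotics via the Mellin transform of $h$. The natural continuous counterpart of the sequence $b(j)=(j(\log j)^\alpha)^{-1}$ is the function $h(x)=(x(\log x)^\alpha)^{-1}$ for $x\geq 2$, extended smoothly to $x<2$. Using a representation $b(j)=\int_0^\infty e^{-jx}g(x)\,\dm x$ with a suitably chosen weight $g$, one can embed $H(b)$ isometrically into an integral operator on $L^2(\bbR_+)$ whose kernel is close to $h(x+y)$. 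A careful analysis should show that the discrepancy lies in a Schatten class $\Sch_p$ with $p<1/\alpha$, so that by the Ky Fan inequality the eigenvalues of $H(b)$ and $\bH(h)$ share the same leading $n^{-\alpha}$ asymptotics.

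Next I would analyse $\bH(h)$ using the Mellin transform $\calM\colon L^2(\bbR_+)\to L^2(\bbR)$. Under $\calM$, an integral Hankel operator with kernel $h(x+y)$ becomes an operator on $L^2(\bbR)$ whose structure is governed by the Mellin transform $\wt h(\xi)$ of $h$. Substituting $x=e^t$ one finds that $\wt h(\xi)$ is essentially the Laplace transform of $t^{-\alpha}$, which produces a $\abs{\xi}^{\alpha-1}$ singularity at $\xi=0$ with a positive coefficient. This positive sign is what forces the asymmetry in \eqref{a0}: the singular part of the symbol contributes only to positive eigenvalues, while the remaining smooth part generates the $O(n^{-\alpha-1})$ remainder controlling $\lambda_n^-$.

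The conversion from singularity of the symbol to power-type eigenvalue decay follows from a Widom/Power-type theorem that turns a $\abs{\xi}^{\alpha-1}$ singularity at an isolated point into $\lambda_n\sim C n^{-\alpha}$. The exponent $-\alpha$ comes directly from that of the singularity of $\wt h$, and the explicit constant $\varkappa(\alpha)$ in \eqref{a1} emerges from a gamma/beta-function identity applied to the leading coefficient of the singular part. The factor $B(\tfrac1{2\alpha},\tfrac12)^\alpha$ is a signature of the eigenvalue counting function of a homogeneous convolution operator on $\bbR$, arising from a natural change of variables in that counting function.

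The main obstacle, I expect, is achieving a sharp enough comparison between $H(b)$ and $\bH(h)$ for the leading asymptotics to survive. A naive pointwise matching yields an operator difference of the same order as the operator itself; one therefore needs to build the continuous model carefully (likely via the Laplace representation with a precisely tuned weight $g$) and estimate the remainder in the appropriate Schatten norm rather than the operator norm. The gamma/beta computation giving $\varkappa(\alpha)$, while delicate, should be routine once the right framework is in place.
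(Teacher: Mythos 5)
Your high-level plan --- pass from $H(b)$ to a continuous model via a Laplace-type integral representation $b(j)=\int_0^\infty e^{-j\lambda}w(\lambda)\,d\lambda$, control the discrepancy in a small Schatten class, and read off the leading asymptotics from the continuous model --- is indeed the skeleton of the Pushnitski--Yafaev proof. The decomposition $b=b_0+b_1$ with $b_0$ having an exact Laplace representation and $b_1$ being a better-behaved remainder is precisely what \cite{PY1} do, and the Schatten-norm control of the remainder is the right tool (though the precise mechanism is the factorisation $H(b_0)=\calN\calN^*\approx\calN^*\calN$, i.e.\ unitary equivalence modulo kernels, rather than an isometric embedding).

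However, there is a genuine misidentification in the middle of your plan. You claim that the Mellin transform $\wt h(\xi)$ of $h(x)=x^{-1}(\log x)^{-\alpha}$ has a $\abs{\xi}^{\alpha-1}$ singularity at $\xi=0$ and that a Widom/Power-type theorem then converts that singularity into $\lambda_n\sim Cn^{-\alpha}$. This is not what happens. On the critical line $\Re\mu=\tfrac12$, the Mellin transform $\wt h(\tfrac12+i\xi)=\int_0^\infty h(x)x^{-1/2+i\xi}\,dx$ is an absolutely convergent integral (the kernel decays like $x^{-3/2}$ up to logs), hence a \emph{smooth, bounded} function of $\xi$ with no singularity whatsoever. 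The exponent $\alpha$ does not live in the symbol: it lives in the weight. After the Laplace representation, $\calN^*\calN$ is, modulo $\Sch_0$ corrections, the weighted Carleman operator $w^{1/2}\bH(1/x)w^{1/2}$ with $w(\lambda)\sim\abs{\log\lambda}^{-\alpha}$ as $\lambda\to 0^+$. Under the exponential substitution $\lambda=e^{-s}$, the Carleman operator becomes convolution with $\tfrac12\operatorname{sech}(u/2)$, whose Fourier symbol $\pi/\cosh(\pi\xi)$ is smooth and rapidly decaying, and the weight becomes $\sigma(s)\sim s^{-\alpha}$ as $s\to+\infty$. The $n^{-\alpha}$ decay of the eigenvalues is then a Birman--Solomyak/Weyl-type consequence of the \emph{weight decay rate}, not of any symbol singularity. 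The explicit constant arises as
$$
\varkappa(\alpha)=(2\pi)^{-\alpha}\Bigl(\int_{-\infty}^\infty\Bigl(\frac{\pi}{\cosh(\pi\xi)}\Bigr)^{1/\alpha}d\xi\Bigr)^\alpha
=2^{-\alpha}\pi^{1-2\alpha}\,B\!\left(\tfrac1{2\alpha},\tfrac12\right)^\alpha,
$$
using $\int_0^\infty(\cosh t)^{-s}\,dt=\tfrac12 B(\tfrac s2,\tfrac12)$. The positivity forcing the asymmetry in \eqref{a0} comes from the manifest non-negativity of $\calN\calN^*$ (equivalently of the weighted Carleman operator), not from a sign of a symbol singularity. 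So while your outline points in the right direction, the symbol-singularity step would not survive an attempt to carry it out, and needs to be replaced by the Laplace-weight/Carleman reduction sketched above.
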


\begin{remark*}
For negative eigenvalues, this result is stated in a slightly weaker form in \cite{PY1}:
$\lambda_n^-(H(b))=o(n^{-\alpha})$. However, following the logic of the proof of our main
result below, it is easy to see
that in fact the estimate $O(n^{-\alpha-1})$ is valid in Theorem~A.
\end{remark*}

\subsection{Helson matrices}
In this paper, we consider an analogous question in the class of Helson matrices
(also known as multiplicative Hankel matrices). These are infinite matrices of the form
$$
M(a)=\{a(jk)\}_{j,k=1}^\infty,
$$
considered as linear operators in $\ell^2(\bbN)$. Here the $(j,k)$'th entry depends on the product
of indices $jk$ rather than on the sum $j+k$.
Helson matrices are a natural object in the theory of Hardy spaces of Dirichlet series, in the same way
as Hankel matrices are naturally related to the theory of classical Hardy spaces.
The study of Helson matrices was initiated in the pioneering paper  \cite{Helson};
see also the book \cite{QQ}   and a recent survey \cite{PerPu}.

The \emph{multiplicative Hilbert matrix} is a Helson matrix corresponding to the sequence
$$
a(j)=1/(\sqrt{j}\log j), \quad j\geq2
$$
(there are variants of this definition, see \cite{PerPu2}; this notion has not become standardised yet).
It is bounded and not compact, and its spectral properties are fully analogous
to the classical Hilbert matrix, see \cite{BPSSV,PerPu2}. Similarly to the Hankel case, it is not difficult to
see that
$$
a(j)=o(1/(\sqrt{j}\log j)), \quad j\to\infty \quad \Rightarrow \quad M(a) \text{ is compact.}
$$
In this paper, we consider a family of compact modifications of the multiplicative Hilbert matrix.
Our main result is
\begin{theorem}\label{thm.a1}
Let $\alpha>0$, and let $a(j)$ be the sequence of real numbers given by
$$
a(j)=1/(\sqrt{j}\log j(\log\log j)^\alpha)
$$
for all sufficiently large $j$ (the choice of finitely many values $a(j)$ is not important).
Then the Helson matrix $M(a)$ is compact and its sequence of eigenvalues
obeys the asymptotics
\[
\lambda_n^+(M(a))=\frac{\varkappa(\alpha)}{n^\alpha}+o(n^{-\alpha}),
\quad
\lambda_n^-(M(a))=O(n^{-\alpha-1}),
\quad n\to\infty,
\label{a2}
\]
where $\varkappa(\alpha)$ is given by \eqref{a1}.
\end{theorem}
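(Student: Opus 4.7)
The plan is to reduce the theorem to its continuous analogue. Let $\bM(a)$ denote the \emph{integral Helson operator} on $L^2(1,\infty)$ with kernel $(x,y)\mapsto a(xy)$. The substitution $x=e^s$, $y=e^t$, combined with the isometry $L^2(1,\infty)\to L^2(0,\infty)$ sending $f$ to $g(s)=e^{s/2}f(e^s)$, identifies $\bM(a)$ unitarily with the integral Hankel operator $\bH(A)$ of symbol $A(s+t)$, where $A(s)=e^{s/2}a(e^s)$. For our sequence $a$, a direct computation gives $A(s)=1/(s(\log s)^\alpha)$ for all large $s$, which is precisely the continuous counterpart of the Hankel symbol of Theorem~A. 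I would therefore proceed in two main steps: first, establish the eigenvalue asymptotics for $\bH(A)$, and second, transfer them to $M(a)$ via a discrete-continuous comparison.

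For the first step, I would follow the scheme of \cite{PY1}: after a Mellin transform (equivalently, a further logarithmic substitution) and localisation to the behaviour of $A$ at infinity, $\bH(A)$ is modelled by an explicit convolution operator whose spectrum can be computed, yielding the constant $\varkappa(\alpha)$ of \eqref{a1}. The non-model remainder is placed in a Schatten ideal $\calS_p$ with $p<1/\alpha$, which gives $\lambda_n^+(\bH(A))=\varkappa(\alpha)/n^\alpha+o(n^{-\alpha})$. The improved bound $\lambda_n^-(\bH(A))=O(n^{-\alpha-1})$ reflects the remark following Theorem~A: the negative part of the model operator lives in the higher ideal $\calS_{1/(\alpha+1)}$.

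For the second step, I would use the isometric embedding $J\colon\ell^2(\bbN)\to L^2(0,\infty)$ defined by $(J\xi)(s)=\xi_n/\sqrt{\log((n+1)/n)}$ for $s\in[\log n,\log(n+1))$. A short computation shows that $(J^*\bH(A)J)_{jk}$ agrees with $a(jk)$ to leading order as $j,k\to\infty$, with the discrepancy being a Hankel-type operator whose effective symbol, when expressed back in the logarithmic variables, is essentially the derivative $A'$ rather than $A$. Since differentiating $A(s)=1/(s(\log s)^\alpha)$ gains an extra factor of $1/s$, the corresponding error operator belongs to $\calS_{1/(\alpha+1)}$ by a further application of the integral version of Theorem~A. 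The complementary projection error $\bH(A)-JJ^*\bH(A)JJ^*$ is controlled analogously. A Weyl/Ky~Fan inequality then transports the eigenvalue asymptotics from $\bH(A)$ to $M(a)$, giving \eqref{a2}, including the sharp bound on $\lambda_n^-$.

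The principal obstacle is the quantitative discrete-continuous comparison. Conceptually, $M(a)$ is a Riemann-sum discretisation of $\bH(A)$ in log-variables, but reaching the Schatten class $\calS_{1/(\alpha+1)}$ — which is what the sharp bound on $\lambda_n^-$ demands — requires a careful use of the Hankel structure, since naive entrywise estimates are too weak to be summed to this ideal. These intermediate discrete-continuous bounds, valid for substantially broader classes of $a$, are the auxiliary results of independent interest announced in the abstract.
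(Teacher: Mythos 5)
Your high-level plan --- pass to the integral Helson operator, then to an integral Hankel operator $\bH(A)$ with symbol $A(s)=1/(s(\log s)^\alpha)$, invoke \cite{PY1}, and transfer back --- is the same skeleton the paper uses. The crucial step where your proposal diverges, and where it has a genuine gap, is the transfer back to $M(a)$.

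You propose to compare $M(a)$ with the compression $J^*\bH(A)J$, where $J$ is the isometry built from the partition $\{[\log n,\log(n+1))\}_{n\ge1}$, and to control the two errors $M(a)-J^*\bH(A)J$ and $\bH(A)-JJ^*\bH(A)JJ^*$ by asserting they are ``Hankel-type'' with effective symbol $A'$. Neither error is a Hankel or Helson operator: the entries of $M(a)-J^*\bH(A)J$ depend on $j$ and $k$ separately (through the interval lengths $\log(1+1/j)$, $\log(1+1/k)$), not only through $jk$; and $(I-JJ^*)\bH(A)$ involves a projection that destroys the shift-invariance characterising Hankel operators. So the machinery of \cite{PY1,PY2} and Theorem~\ref{thm.b3} does not apply to these errors directly. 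A Taylor expansion does suggest the errors carry an extra factor of roughly $\diag(1/j)$ against a Helson-like matrix, and a Schatten--H\"older bound could then put them in $\Sch_{1/(\alpha+1),\infty}$; but this is a nontrivial claim requiring a separate argument, not a corollary of the Hankel theory you cite, and you have not supplied it. Without it, you have not even established the $o(n^{-\alpha})$ error needed for the $\lambda_n^+$ asymptotics, let alone the sharp $O(n^{-\alpha-1})$ bound on $\lambda_n^-$.

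The paper avoids this obstacle by a different route. It first splits $a=a_0+a_1$, where $a_0$ carries an exact integral representation $\ba_0(t)=\int_0^\infty t^{-1/2-\lambda}w(\lambda)\,d\lambda$. For $a_0$, Lemma~\ref{lma.c1} factorises $M(a_0)=\calN\calN^*$ and $w^{1/2}\bH(\bz_1)w^{1/2}=\calN^*\calN$ through the Dirichlet-series operator $\calN$; this gives an \emph{exact} unitary equivalence modulo kernels (no error to bound), and the residual comparison between the weighted $\bz$-kernel and the weighted Carleman kernel is a $\Sch_0$ perturbation (Lemma~\ref{lma.cc4}). For $a_1$, which is smaller by a factor $(\log\log t)^{-1}$, the paper proves the one-sided Schatten bound $\norm{M(r(\ba))}_{\Sch_p}\le C_p\norm{\bM(\ba)}_{\Sch_p}$ for $p\le1$ (Theorem~\ref{thm.a2}), using Peller's characterisation of $\bH(\bb)\in\Sch_p$ (Theorem~\ref{thm.b3}) together with the Plancherel--P\'olya inequality and Littlewood--Paley localisation, then interpolates to $\Sch_{p,\infty}$. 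This one-sided bound replaces your two-sided discretisation-error estimate: it requires no Taylor expansion and no control over a non-Hankel remainder. If you want to salvage your route, you would need to prove, from scratch, that the piecewise-constant discretisation errors lie in $\Sch_{1/(\alpha+1),\infty}$; alternatively, adopting the split $a=a_0+a_1$ and the one-sided restriction bound is considerably cleaner.
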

Thus, we have a natural family of Helson matrices $M(a^{(\alpha)})$, parameterised by $\alpha$, 
such that $M(a^{(\alpha)})\in\Sch_p$ if and only if $p>1/\alpha$. 
Here $\Sch_p$ is the standard Schatten class, see Section~\ref{sec.a6} below.

Below we describe the key ideas of the proof of Theorem~\ref{thm.a1}; some of them
may be of an independent interest.
In order to do this, we need some definitions.

\subsection{Integral Hankel and Helson operators}
First we recall the definition of a classical object: integral Hankel operators.
For a complex valued \emph{kernel function}, or more generally a
distribution, $\bb$ on $\bbR_+$, we denote by
$\bH(\bb)$ the integral Hankel operator in $L^2(\bbR_+)$, formally defined by
$$
\bH(\bb): f\mapsto \int_0^\infty \bb(x+y)f(y)dy.
$$
Clearly, integral Hankel operators are continuous analogues of Hankel matrices.
Below we only consider bounded and compact Hankel operators.
We use boldface font to denote integral operators (and their kernels).

Next, for a complex valued function or distribution
$\ba$ on $(1,\infty)$,
let us consider an integral operator in $L^2(1,\infty)$, defined by
$$
\bM(\ba): f\mapsto \int_1^\infty \ba(ts)f(s)ds, \quad t\geq1.
$$
It will be convenient to call $\bM(\ba)$ an \emph{integral Helson operator}
(this is not a standard term). We regard $\bM(\ba)$ as a continuous analogue
of the Helson matrix $M(a)$.

Observe that by an exponential change of variables, $\bM(\ba)$ reduces to an integral Hankel operator.
More precisely, let $V$ be the unitary operator
\[
V: L^2(\bbR_+)\to L^2(1,\infty),
\quad
(Vf)(t)=\frac1{\sqrt{t}}f(\log t),
\quad t>1,
\label{d0}
\]
then
\[
V^*\bM(\ba)V=\bH(\bb), \quad \bb(x)=\ba(e^x)e^{x/2}, \quad x>0.
\label{aa9}
\]
Spectral theory of integral Hankel operators is very well developed, and below
we will use some available results for eigenvalue estimates and asymptotics of such operators
to deduce the corresponding statements for integral Helson operators.

Note that although $\bM(\ba)$ can be reduced to an integral Hankel operator
through the exponential change of variable $t=e^x$, no such ``change of variable"
exists on integers, and therefore in general there is no simple reduction of
Helson matrices to Hankel matrices.

\subsection{The strategy of the proof}

Consider the integral Helson operator $\bM(\ba)$ with the kernel function $\ba\in C^\infty([1,\infty))$ which
satisfies
\[
\ba(t)=t^{-1/2}(\log t)^{-1}(\log \log t)^{-\alpha}, \quad t\geq t_0>e.
\label{a9}
\]
Clearly, the sequence $a$ of Theorem~\ref{thm.a1} is the restriction of the function $\ba$ onto
$\bbN$ (up to finitely many terms). 
It will be convenient to have some notation for the operation of restriction onto integers.
If $\ba$ is a continuous function on $(1,\infty)$, let $r(\ba)$ denote the
sequence
\[
r(\ba)(j)=
\begin{cases}
0,\quad j=1, \\
\ba(j), \quad j\ge 2.
\end{cases}
\label{a12}
\]

\emph{We will prove that the operators $\bM(\ba)$ and $M(r(\ba))$ have the same leading order
asymptotics of both positive and negative eigenvalues.}
This reduces the question to the spectral analysis of $\bM(\ba)$. Further, as already discussed,
relation \eqref{aa9} reduces the spectral analysis of $\bM(\ba)$ to that of the integral Hankel
operator $\bH(\bb)$ with
\[
\bb(x)=e^{x/2}\ba(e^x)=x^{-1}(\log x)^{-\alpha}, \quad x\geq x_0>1.
\label{a8}
\]
This two step reduction procedure can be illustrated by the diagram
\[
M(r(\ba))\quad\to\quad\bM(\ba)\quad\to\quad\bH(\bb).
\label{a7}
\]
The integral operator $\bH(\bb)$ is a continuous analogue of the Hankel
matrix in Theorem~A. The eigenvalues of $\bH(\bb)$ satisfy the same
asymptotic relation as \eqref{a0}, i.e.
\[
\lambda_n^+(\bH(\bb))=\frac{\varkappa(\alpha)}{n^\alpha}+o(n^{-\alpha}), \quad
\lambda_n^-(\bH(\bb))=O(n^{-\alpha-1}),
\quad n\to\infty,
\label{a11}
\]
where $\varkappa(\alpha)$ is the same as in \eqref{a1};
this is again a result of \cite{PY1}.
Thus, reduction \eqref{a7} together with \eqref{a11} will yield a proof of Theorem~\ref{thm.a1}.

\subsection{Further details and the structure of the paper}\label{sec.a5}
While the second reduction in \eqref{a7} is straightforward, the first reduction
is technically a little more involved; we proceed to explain it.
We split  the sequence $a$ into two terms
$$
a(j)=a_0(j)+a_1(j).
$$
Here $a_0$ is a sequence which has the same asymptotics as $a$, but is given
by a convenient integral representation; $a_1$ is the error term.
More precisely, let us describe the choice of $a_0$.

We use the fact that (see \cite{Erdelyi}) for any $0<c<1$,
one has the Laplace transform asymptotics
$$
\int_0^c \abs{\log \lambda}^{-\alpha}e^{-x\lambda}d\lambda=x^{-1}(\log x)^{-\alpha}
\bigl(1+O((\log x)^{-1})\bigr), \quad x\to\infty.
$$
Substituting $x=\log t$ and multiplying by $t^{-1/2}$, we obtain
$$
\int_0^c \abs{\log \lambda}^{-\alpha}t^{-\frac12-\lambda}d\lambda
=
t^{-1/2}(\log t)^{-1}(\log \log t)^{-\alpha}\bigl(1+O((\log\log t)^{-1})\bigr),
\quad t\to\infty.
$$

Now let $w(\lambda)=\abs{\log\lambda}^{-\alpha}\chi(\lambda)$, where
$\chi\in C^\infty(\bbR_+)$ is a non-negative function such that
$\chi(\lambda)=1$ for all sufficiently small $\lambda>0$ and $\chi(\lambda)=0$
for $\lambda\geq1$.
We set
\[
\ba_0(t)=\int_0^\infty t^{-\frac12-\lambda}w(\lambda)d\lambda, \quad
\ba_1(t)=\ba(t)-\ba_0(t), \quad
t>1,
\label{aa8}
\]
where the function $\ba$ is given by \eqref{a9}.
Then, by the above calculation,
$$
\ba_1(t)=O(t^{-1/2}(\log t)^{-1}(\log \log t)^{-\alpha-1}), \quad t\to\infty.
$$
Further, with the notation \eqref{a12}, we set $a_0=r(\ba_0)$ and $a_1=r(\ba_1)$.

 In Section~\ref{sec.c}, we will prove that $M(a_0)$ is unitarily equivalent to $\bM(\ba_0)$,
up to a negligible term, and as a consequence, the spectral asymptotics of these two operators
coincide to all orders. In fact, we will prove a more general statement (see Theorem~\ref{thm.cc1}):
if $\ba_0$ is given by the integral representation \eqref{aa8} then, for a fairly general class of weights $w$,
the Helson integral operator $\bM(\ba_0)$ is unitarily equivalent to the Helson matrix $M(r(\ba_0))$,
up to a negligible term.

In Section~\ref{sec.d}, we will reduce the spectral estimates for $M(a_1)$ to those for $\bM(\ba_1)$.
More precisely, in Theorem~\ref{thm.a2} 
\emph{we prove that the linear operator $\bM(\ba)\mapsto M(r(\ba))$ is bounded in
Schatten classes $\Sch_p$ for $0<p\leq1$, i.e. one has the estimate}
$$
\norm{M(r(\ba))}_{\Sch_p}\leq C_p\norm{\bM(\ba)}_{\Sch_p}, \quad 0<p\leq 1.
$$
This statement might be of an independent interest. 
By using real interpolation, we obtain the implication
$$
s_n(\bM(\ba))=O(n^{-\alpha-1}), \quad n\to\infty
\quad\Rightarrow\quad
s_n(M(r(\ba)))=O(n^{-\alpha-1}), \quad n\to\infty,
$$
for any $\alpha>0$, where $s_n$ are singular values
(see Section~\ref{sec.a6} below).

Thus, using somewhat different technical tools, we reduce the analysis
of both Helson matrices $M(a_0)$ and $M(a_1)$ to the corresponding integral
Helson operators $\bM(\ba_0)$ and $\bM(\ba_1)$. Next, we set, as in \eqref{aa9},
\[
\bb_i(x)=e^{x/2}\ba_i(e^x), \quad i=0,1,
\label{aa10}
\]
and use the available results from \cite{PY1,PY2} which give
\begin{align*}
\lambda_n^+(\bH(\bb_0))
&=
\varkappa(\alpha)n^{-\alpha}+o(n^{-\alpha}), \quad n\to\infty,
\\
s_n(\bH(\bb_1))
&=
O(n^{-\alpha-1}), \quad n\to\infty
\end{align*}
(we also have $\bH(\bb_0)\geq0$ and so $\lambda_n^-(\bH(\bb_0))=0$ for all $n$). 
Finally, in Section \ref{sec.e}, we use standard spectral stability
results to combine these two relations to
complete the proof of Theorem \ref{thm.a1}.

We represent this refined explanation of our proof by the following diagram:
\begin{gather*}
M(a)=M(a_0)+M(a_1);
\\
{\begin{split}
M(a_0)\to \bM(\ba_0)\to \bH(\bb_0)&\to \text{\cite{PY1}: asymptotics}
\\
M(a_1)\to \bM(\ba_1)\to \bH(\bb_1)&\to \text{\cite{PY2}: estimates}
\end{split}} \biggr\}\text{(stability)}
\Rightarrow \text{Theorem~\ref{thm.a1}.}
\end{gather*}

\subsection{Notation: Schatten classes}\label{sec.a6}
We denote by $\{s_n(A)\}_{n=1}^\infty$ the non-increasing sequence of the singular
values of a compact operator $A$, i.e. $s_n(A)=\lambda_n^+(\sqrt{A^*A})$.
Recall that for $0<p<\infty$, the Schatten class $\Sch_p$ consists of all compact operators $A$ such that
$$
\norm{A}_{\Sch_p}:=\left(\sum_{n=1}^\infty s_n(A)^p\right)^\frac{1}{p}
<\infty.
$$
We will write $\Sch_\infty$ to denote the class of compact operators.
Observe that $\norm{A}_{\Sch_p}$ is a norm for $p\geq1$ and a quasi-norm for $0<p<1$.
For $0<p<1$, the usual triangle inequality fails in $\Sch_p$  but the following ``modified triangle inequality" holds:
\[
\norm{A+B}_{\Sch_p}^p\leq \norm{A}_{\Sch_p}^p+\norm{B}_{\Sch_p}^p, \quad 0<p<1, \quad A,B\in \Sch_p.
\label{a10}
\]

For $0<p<\infty$ and $0<q\le \infty$, the Schatten-Lorentz class
$\Sch_{p,q}$ consists of all compact operators $A$ such that
\textbf{}
$$
\norm{A}_{\Sch_{p,q}}:=
\begin{dcases}
\left(\sum_{n=1}^\infty s_n(A)^q(1+n)^{q/p -1}\right)^{\frac{1}{q}}
<\infty, \quad q<\infty, \\
\sup_{n\in\bbN} (1+n)^{1/p}s_n(A)<\infty, \quad q=\infty.
\end{dcases}
$$
It is evident that $\Sch_{p,p} = \Sch_p$ for every
$0<p<\infty$. 
The classes $\Sch_{p,\infty}$ are known as weak
Schatten classes and have the property that $A\in \Sch_{p,\infty}$
if and only if $s_n(A)=O(n^{-1/p})$, $n\to\infty$.

We denote $\Sch_0=\cap_{p>0} \Sch_p$. This is the class of all operators $A$ such that $s_n(A)=O(n^{-c})$
as $n\to\infty$ for any $c>0$.

\subsection{Notation: unitary equivalence modulo kernels}
If $A_j$ is a bounded operator in a Hilbert space $\calH_j$ for $j=1,2$, we will
say that $A_1$ and $A_2$ are unitarily equivalent modulo kernels and write
$A_1\approx A_2$, if the operators
$$
A_1|_{(\Ker A_1)^\perp}
\quad\text{ and }\quad
A_2|_{(\Ker A_2)^\perp}
$$
are unitarily equivalent. It is well known that for any bounded operator $A$
(acting from a Hilbert space to a possibly different Hilbert space), one has
\[
A^*A\approx AA^*.
\label{b8}
\]
We will frequently use this relation in the following situation: if $A$ is compact,
then \eqref{b8} implies that $s_n(A^*A)=s_n(AA^*)$ for all $n$.

\subsection{Acknowledgements}
We are grateful to K.~Seip and H.~Queff\'elec for stimulating discussions, and to 
J.~Partington for help with the relevant literature. 

\section{$\bM(\ba)\approx M(r(\ba))$ up to error term}\label{sec.c}

\subsection{Overview}
In this section, we prove
\begin{theorem}\label{thm.cc1}
Let $w$ be a non-negative bounded function on $\bbR_+$ with bounded support.
Let
\[
\ba(t)=\int_0^\infty t^{-\frac12-\lambda}w(\lambda)d\lambda, \quad t>1,
\label{cc1}
\]
and let $a(j)=\ba(j)$, $j\in\bbN$. Then
we have $M(a)\geq0$ and $\bM(\ba)\geq0$. Further,
there exist self-adjoint operators $A$ and $B$ in
$L^2(\bbR_+)$ such that
$$
M(a)\approx A, \quad \bM(\ba)\approx B, \quad A-B\in \Sch_0.
$$
\end{theorem}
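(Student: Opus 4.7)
The plan is to exploit the fact that the integral representation \eqref{cc1} yields natural Gram-matrix type factorisations of both $M(a)$ and $\bM(\ba)$ over the same auxiliary Hilbert space $L^2(\bbR_+)$. Concretely, I would define
$$
L:L^2(\bbR_+)\to \ell^2(\bbN),\quad (Lg)(j)=\int_0^\infty j^{-1/2-\lambda}\sqrt{w(\lambda)}\,g(\lambda)\,d\lambda,
$$
and, analogously, $\tilde L:L^2(\bbR_+)\to L^2(1,\infty)$ by the same formula with $j$ replaced by the continuous variable $t$. A short direct computation, using the factorisation $(jk)^{-1/2-\lambda}=j^{-1/2-\lambda}k^{-1/2-\lambda}$ under the integral, gives $LL^*=M(a)$ and $\tilde L\tilde L^*=\bM(\ba)$. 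This at once yields both positivity statements and, by \eqref{b8}, supplies candidates $A:=L^*L$ and $B:=\tilde L^*\tilde L$ living in $L^2(\bbR_+)$ with $M(a)\approx A$ and $\bM(\ba)\approx B$.

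The second step is to compute $A$ and $B$ explicitly. Interchanging sum/integral (legal because $w$ is bounded with compact support, so everything reduces to a compact $\lambda$-range) and invoking the Dirichlet series identity $\sum_{j\ge1}j^{-1-\lambda-\mu}=\zeta(1+\lambda+\mu)$ on the one hand, and the elementary integral $\int_1^\infty t^{-1-\lambda-\mu}\,dt=(\lambda+\mu)^{-1}$ on the other, one finds that $A$ and $B$ are the integral operators on $L^2(\bbR_+)$ with kernels
$$
K_A(\lambda,\mu)=\sqrt{w(\lambda)}\,\zeta(1+\lambda+\mu)\,\sqrt{w(\mu)},\quad
K_B(\lambda,\mu)=\sqrt{w(\lambda)}\,(\lambda+\mu)^{-1}\,\sqrt{w(\mu)}.
$$

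The final step, and the main point of the theorem, is to show that $A-B\in\Sch_0$. The key observation is that the individual kernels share a Hilbert-type singularity at $\lambda+\mu=0$, but their difference is regular: the classical Laurent expansion $\zeta(s)=(s-1)^{-1}+\gamma+O(s-1)$ shows that the function $\sigma\mapsto\zeta(1+\sigma)-\sigma^{-1}$ extends to an entire function. Choosing $M$ with $\supp w\subset[0,M]$, the ``inner'' kernel $\zeta(1+\lambda+\mu)-(\lambda+\mu)^{-1}$ is then real-analytic on the compact square $[0,M]^2$. The associated integral operator $T$ on $L^2([0,M])$ therefore has singular values decaying faster than any polynomial (in fact exponentially, by Hille--Tamarkin type estimates for analytic kernels), so $T\in\Sch_0$. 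Since $A-B$ factors as $M_{\sqrt{w}}\,T\,M_{\sqrt{w}}$ with $M_{\sqrt{w}}$ the bounded multiplication by $\sqrt{w}$, the standard bound $s_n(C_1 X C_2)\le\|C_1\|\,\|C_2\|\,s_n(X)$ transfers the rapid decay and gives $A-B\in\Sch_0$.

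The main obstacle I anticipate is purely technical: one must justify the interchange of summation and integration in the derivation of $K_A$ near $\lambda+\mu=0$, where the series $\sum j^{-1-\lambda-\mu}$ is only conditionally controlled, and one must argue that multiplying an analytic-kernel operator by the merely bounded (not smooth) functions $\sqrt{w}$ does not destroy the rapid singular-value decay. Both points are handled by the $s_n$-inequality above combined with a cutoff/Fubini argument exploiting the compactness of $\supp w$.
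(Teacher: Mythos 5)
Your factorizations are exactly the paper's: your $L$ is the operator $\calN$ of Lemma~\ref{lma.c1}, and your $\tilde L$ is the operator $w^{1/2}\calL$ appearing (after conjugation by $V$) in Lemma~\ref{lma.cc5}; in particular your $A=L^*L$ and $B=\tilde L^*\tilde L$ coincide with the paper's $A=w^{1/2}\bH(\bz_1)w^{1/2}$ and $B=w^{1/2}\bH(1/x)w^{1/2}$, so the positivity and the two unitary equivalences modulo kernels are obtained by the same route, via \eqref{b8}. (For the Fubini step: note that a naive Cauchy--Schwarz bound on $L$ fails near $\lambda+\mu=0$ when $w$ does not vanish near the origin; the paper circumvents this by restricting to $g$ supported away from $0$, using boundedness of $A$ to pass to the limit --- your ``cutoff'' should be in the $\lambda$-variable near $0$, not merely a reflection of the compactness of $\supp w$.)

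Where you genuinely diverge is in proving $A-B\in\Sch_0$. The paper (Lemma~\ref{lma.cc4}) works at the level of Hankel kernels on all of $\bbR_+$: it inserts the intermediate kernel $\bh(x)=e^{-\beta x}/x$, observes that $\bz_1-\bh-1$ is Schwartz and that $\1_{(0,\beta)}\calL\1_{(0,\beta)}$ is unitarily equivalent to a Hankel operator with Schwartz kernel, and invokes Peller's Besov-class criterion (Theorem~\ref{thm.b3}) twice. You instead observe that the difference kernel $\sqrt{w(\lambda)}\bigl(\zeta(1+\lambda+\mu)-(\lambda+\mu)^{-1}\bigr)\sqrt{w(\mu)}$ is supported in the compact square $[0,M]^2$, on which the middle factor extends to a real-analytic (indeed entire, by the Laurent expansion of $\zeta$ at its pole) function, and then apply the elementary fact that a $C^\infty$ (a fortiori analytic) kernel on a compact square yields an operator with super-polynomially decaying singular values, sandwiched between bounded multiplications. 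This bypasses Theorem~\ref{thm.b3} and the detour through $\bh$ entirely, at the modest cost of quoting the smooth-kernel singular-value estimate; it is shorter and arguably more transparent, though less uniform with the rest of the paper, which leans on Peller's theorem throughout Section~\ref{sec.d}. Both arguments are correct.
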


In combination with standard results on the stability of spectral asymptotics, Theorem~\ref{thm.cc1}
shows that if $\bM(\ba)$ and $M(a)$ are compact, then the eigenvalue
asymptotics of these operators coincide to all orders. This is precisely what we need
in our setting --- see Section~\ref{sec.e}.

Although our primary interest in this paper is to compact Helson matrices,
Theorem~\ref{thm.cc1} can be used in the non-compact context as well.
Indeed, in combination with the Weyl theorem on the invariance of the essential spectrum
with respect to compact perturbations, this result shows that
the non-zero parts of the essential spectra of $\bM(\ba)$ and $M(a)$ coincide.
Similarly, in combination with the Kato-Rosenblum theorem, this result shows that
the absolutely continuous parts of $\bM(\ba)$ and $M(a)$ are unitarily equivalent.
Variants of this reasoning have been used in \cite{BPSSV,PerPu2} in order to analyse
the multiplicative Hilbert matrix.

\subsection{Reduction to weighted integral Hankel operator}

We start by recalling a theorem from \cite{PerPu} which establishes a unitary
equivalence modulo kernels between a Helson matrix $M(a)$, where $a$ has
an integral representation of the type \eqref{cc1}, and a weighted integral Hankel type operator
$w^{1/2}\bH(\bz(\cdot+1))w^{1/2}$ with the integral kernel
$$
w(x)^{1/2}\bz(x+y+1)w(y)^{1/2}, \quad x,y>0
$$
in $L^2(\bbR_+)$, where $\bz$ is the Riemann zeta function.

\begin{lemma}\label{lma.c1}
Let $w\in L^\infty(\bbR)\cap L^1(\bbR)$ be a non-negative function, and let
$$
a(j)=\int_0^\infty j^{-\frac12-\lambda}w(\lambda)d\lambda, \quad j\geq1.
$$
Then the Helson matrix $M(a)$ is a bounded non-negative operator on $\ell^2(\bbN)$.
Let $\bz_1(x)=\bz(x+1)$. Then $w^{1/2}\bH(\bz_1)w^{1/2}$ is
bounded on $L^2(\bbR)$ and  
$$
M(a)\approx w^{1/2}\bH(\bz_1)w^{1/2}.
$$
\end{lemma}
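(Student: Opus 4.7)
The plan is to realize both $M(a)$ and $w^{1/2}\bH(\bz_1)w^{1/2}$ as the two factorisations $S^*S$ and $SS^*$ of a single auxiliary operator $S\colon\ell^2(\bbN)\to L^2(\bbR_+)$, so that the unitary equivalence modulo kernels becomes a direct application of \eqref{b8} and positivity is automatic from the factorisation.

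Motivated by the identity $(jk)^{-1/2-\lambda}=j^{-1/2-\lambda}k^{-1/2-\lambda}$ appearing inside the integral representation of $a(jk)$, I will introduce
\begin{equation*}
(Sf)(\lambda):=w(\lambda)^{1/2}\sum_{j=1}^\infty j^{-\frac12-\lambda}f(j),\qquad \lambda>0,
\end{equation*}
defined initially on finitely supported sequences $f$. Two formal Fubini interchanges, to be justified a posteriori once $S$ is known to be bounded, then give $S^*S=M(a)$ and, via $\sum_{j\ge 1}j^{-1-\lambda-\mu}=\bz(1+\lambda+\mu)$, also $SS^*=w^{1/2}\bH(\bz_1)w^{1/2}$.

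The central step, and the main obstacle, is to break the apparent circularity by an independent proof of boundedness of $SS^*$ on $L^2(\bbR_+)$. My plan is to split $\bz(x+1)=x^{-1}+g(x)$, where $g(x):=\bz(x+1)-x^{-1}$ is bounded and continuous on $(0,\infty)$: the Laurent expansion $\bz(s)=(s-1)^{-1}+\gamma+O(s-1)$ at $s=1$ gives $g(x)\to\gamma$ as $x\to 0^+$, while the Dirichlet series of $\bz$ gives $\bz(x+1)\to 1$ as $x\to\infty$. The sandwiched Carleman operator $w^{1/2}\bH(1/x)w^{1/2}$ is then controlled by $\pi\|w\|_\infty$ via Hilbert's integral inequality, and the remainder $w^{1/2}\bH(g)w^{1/2}$ is Hilbert--Schmidt with norm at most $\|g\|_\infty\|w\|_1$. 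Both estimates use exactly the hypothesis $w\in L^1\cap L^\infty$.

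With $SS^*$ bounded, $\|S\|^2=\|SS^*\|<\infty$, whence $M(a)=S^*S$ is bounded and non-negative on $\ell^2(\bbN)$, the Fubini interchanges above become legitimate, and the equivalence $M(a)=S^*S\approx SS^*=w^{1/2}\bH(\bz_1)w^{1/2}$ is then just \eqref{b8}. Beyond the Carleman-plus-Hilbert--Schmidt decomposition of the weighted zeta kernel, no substantial work should be required.
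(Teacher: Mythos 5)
Your proof is correct and uses essentially the same factorisation argument as the paper: your operator $S$ is precisely $\calN^*$ in the paper's notation, and both proofs conclude $M(a)\approx w^{1/2}\bH(\bz_1)w^{1/2}$ by exhibiting one operator as $T^*T$ and the other as $TT^*$, then invoking \eqref{b8}. The one genuine divergence is in how you establish boundedness of the weighted zeta kernel. The paper writes $\bz(1+x)=\bigl(\bz(1+x)-1\bigr)+1$, bounds $0\le\bz(1+x)-1=\sum_{j\ge2}j^{-1-x}\le 1/x$ by comparison with an integral, and so obtains a Carleman-dominated term plus a rank-one term $(\cdot,w^{1/2})w^{1/2}$. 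You instead write $\bz(1+x)=1/x+g(x)$ with $g$ bounded, which requires the Laurent expansion of $\bz$ at $s=1$ to control $g$ near $x=0$, and you bound $w^{1/2}\bH(g)w^{1/2}$ in Hilbert--Schmidt norm by $\norm{g}_\infty\norm{w}_{L^1}$. Both decompositions are valid and use exactly the hypothesis $w\in L^1\cap L^\infty$. The paper's version is marginally more elementary (no Laurent expansion, and a rank-one rather than Hilbert--Schmidt correction), while yours isolates the $1/x$ singularity exactly, which reads a bit more transparently. One small presentational caveat: the phrase ``$\norm{S}^2=\norm{SS^*}<\infty$'' should be unwound as you indicate --- show that $\norm{S^*g}^2$ equals the quadratic form of $w^{1/2}\bH(\bz_1)w^{1/2}$ on a dense set of $g$, then use the a priori bound on the latter to conclude $S^*$ (hence $S$) is bounded; this is exactly what the paper does with $\calN$, so no real gap.
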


\begin{proof}
This was proven in \cite{PerPu}, but for completeness we repeat the proof.

First let us check that $w^{1/2}\bH(\bz_1)w^{1/2}$ is bounded.
Recall that the Carleman operator $\bH(1/x)$, i.e. the integral Hankel operator with the
kernel function $\bb(x)=1/x$, is bounded on $L^2(\bbR_+)$ and has norm $\pi$.
Next, we have an elementary estimate
$$
0\leq \bz(x+1)-1=\sum_{j=2}^\infty j^{-x-1}\leq \int_1^\infty \frac{dt}{t^{x+1}}=\frac1x,
$$
and so for $\bb(x)=\bz(x+1)-1$, we obtain the estimate $\norm{\bH(\bb)}\leq \pi$.
Further, we have
$$
w^{1/2}\bH(\bz_1)w^{1/2}=w^{1/2}\bH(\bb)w^{1/2}+(\cdot,w^{1/2})w^{1/2},
$$
where the second term denotes the rank one operator in $L^2(\bbR_+)$
with the integral kernel $w(x)^{1/2}w(y)^{1/2}$.
Since $w\in L^1(\bbR)\cap L^\infty(\bbR)$, both terms here are bounded:
$$
\norm{w^{1/2}\bH(\bb)w^{1/2}}\leq \pi\norm{w^{1/2}}_{L^\infty}^2=\pi\norm{w}_{L^\infty},
\quad
\norm{(\cdot,w^{1/2})w^{1/2}}=\norm{w^{1/2}}_{L^2}^2=\norm{w}_{L^1}.
$$
We obtain that $w^{1/2}\bH(\bz_1)w^{1/2}$ is bounded.

Next, consider the operator
$$
\calN: L^2(\bbR_+)\to \ell^2(\bbN),
\quad
f\mapsto\biggl\{\int_0^\infty j^{-x-\frac12}w(x)^{1/2}f(x)dx\biggr\}_{j=1}^\infty,
$$
defined initially on the dense set of functions $f\in L^2(\bbR_+)$ with support
separated away from zero.
We claim that $\calN$ is bounded and $w^{1/2}\bH(\bz_1)w^{1/2}=\calN^*\calN$.
This is a direct calculation:
\begin{align*}
(\calN f_1,\calN f_2)_{\ell^2(\bbN)}
&=
\sum_{j=1}^\infty
\int_0^\infty \int_0^\infty j^{-1-x-y}w(x)^{1/2}w(y)^{1/2}f_1(x)\overline{f_2(y)}dx \, dy
\\
&=
\int_0^\infty \bz(x+y+1)w(x)^{1/2}w(y)^{1/2}f_1(x)\overline{f_2(y)}dx\, dy
\\
&=
(w^{1/2}\bH(\bz_1)w^{1/2} f_1,f_2)_{L^2(\bbR_+)},
\end{align*}
which proves our claim.

Further, let us compute the adjoint $\calN^*$:
$$
\calN^*: \ell^2(\bbN)\to L^2(\bbR_+), \quad
u=\{u_j\}_{j=1}^\infty \mapsto w(x)^{1/2}\sum_{j=1}^\infty u_j j^{-\frac12-x},
\quad
x>0.
$$
Then for $u,v\in \ell^2(\bbN)$ we have
\begin{multline*}
(\calN^* u,\calN^*v)_{L^2(\bbR_+)}
=
\int_0^\infty w(x) \biggl(\sum_{j,k=1}^\infty (jk)^{-\frac12-x}u_j\overline{v_k}\biggr)dx
\\
=
\sum_{j,k=1}^\infty a(jk) u_j\overline{v_k}
=
(M(a)u,v)_{\ell^2(\bbN)}.
\end{multline*}
This calculation proves that  $M(a)$ is bounded and $M(a)=\calN\calN^*$.

To summarise: for a bounded operator $\calN$, we have proven the identities
$$
w^{1/2}\bH(\bz_1)w^{1/2}=\calN^*\calN, \quad M(a)=\calN\calN^*.
$$
This shows that $w^{1/2}\bH(\bz_1)w^{1/2}\approx M(a)$, as required.
\end{proof}

\subsection{Reduction to a weighted Carleman operator}

\begin{lemma}\label{lma.cc4}
Let $w$ be as in Theorem~\ref{thm.cc1}.
Then
$$
w^{1/2}\bH(\bz_1)w^{1/2}
-
w^{1/2}\bH(1/x)w^{1/2}\in\Sch_0.
$$
\end{lemma}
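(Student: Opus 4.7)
The plan is to note that the kernel of the difference operator is
$w(x)^{1/2}\bb(x+y)w(y)^{1/2}$, where $\bb(u):=\bz(u+1)-1/u$ extends to an \emph{entire} function of $u$---indeed, it is standard that $\bz(s)-1/(s-1)$ is entire, so the pole of $\bz(u+1)$ at $u=0$ is cancelled by $1/u$. Since $\supp w\subset[0,R]$ for some $R>0$, we only ever evaluate $\bb$ on the compact interval $[0,2R]$, on which $\bb$ is analytic in a complex neighbourhood.

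The strategy is to exploit this analyticity by approximating $\bb$ uniformly on $[0,2R]$ by polynomials of degree $\le n$ with exponentially small error, and to combine this with the classical observation that a Hankel integral operator whose kernel is a polynomial of degree $n$ in $x+y$ has rank at most $n+1$. For any $\rho>2R$, Cauchy's inequality applied to the Taylor expansion $\bb(u)=\sum_{k\ge 0}c_k u^k$ gives $|c_k|\le M_\rho\rho^{-k}$, where $M_\rho:=\max_{|z|\le\rho}|\bb(z)|<\infty$. The truncation $p_n(u):=\sum_{k=0}^n c_k u^k$ then satisfies
$$
\sup_{u\in[0,2R]}\abs{\bb(u)-p_n(u)}\le C_\rho\,(2R/\rho)^n,
$$
which is exponentially small with rate $\rho/(2R)>1$ that can be made arbitrarily large by choosing $\rho$ large.

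By the binomial expansion of $(x+y)^k$, the operator on $L^2(\bbR_+)$ with kernel $w(x)^{1/2}p_n(x+y)w(y)^{1/2}$ has rank at most $n+1$. Combining the min-max principle with a crude Hilbert--Schmidt bound on the remainder yields
$$
s_{n+2}(w^{1/2}\bH(\bb)w^{1/2})
\le
\Norm{w^{1/2}\bH(\bb-p_n)w^{1/2}}_{\Sch_2}
\le
\norm{w}_{L^1(\bbR_+)}\sup_{u\in[0,2R]}\abs{\bb(u)-p_n(u)}
\le C_\rho'(2R/\rho)^n.
$$
Since $\rho>2R$ is arbitrary, the singular values decay faster than any polynomial rate, so the difference belongs to $\Sch_p$ for every $p>0$, i.e.\ to $\Sch_0$, as required. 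There is no serious obstacle here; the only small technical point is that $w^{1/2}$ is not assumed smooth, which rules out reading off $\Sch_0$ directly from smoothness of the sandwiched kernel---this is precisely why the polynomial/finite-rank route is used rather than a smoothing argument.
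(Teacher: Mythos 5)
Your proof is correct, and it takes a genuinely different route from the paper's. You work directly with the entire function $\bb(u)=\bz(u+1)-1/u$ (the pole of $\bz$ at $1$ is killed by $1/u$), exploit the bounded support of $w$ to localize to a compact interval $[0,2R]$, approximate $\bb$ there by its Taylor polynomials $p_n$ with error $O((2R/\rho)^n)$ for any $\rho>2R$ via Cauchy estimates, and then combine the rank bound $\rank\le n+1$ for the polynomial part (indeed, grouping $p_n(x+y)=\sum_{j=0}^n x^j q_j(y)$ gives at most $n+1$ rank-one kernels) with the Ky Fan inequality $s_{n+2}(A)\le s_{n+2}(B)+s_1(A-B)$ and a Hilbert--Schmidt bound on the remainder. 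This gives exponential decay of singular values at an arbitrarily large rate, which is more than enough for $\Sch_0$. The paper's proof instead decomposes $\bz_1(x)$ as $1+e^{-\beta x}/x+\wt\bh(x)$ with $\wt\bh$ a restriction of a Schwartz function, invokes Peller's Besov-space characterisation (Theorem~\ref{thm.b3}) to conclude $\bH(\wt\bh)\in\Sch_0$, and then handles the remaining piece $1/x-e^{-\beta x}/x=\calL[\1_{(0,\beta)}]$ via a Laplace-transform factorisation and a change of variables bringing $\1_{(0,\beta)}\calL\1_{(0,\beta)}$ to Hankel form with a Schwartz kernel. Your route avoids Peller's criterion entirely and is more elementary and self-contained; the paper's route has the advantage of reusing machinery (Theorem~\ref{thm.b3}) already needed elsewhere in the argument, and it also produces intermediate statements (for $\bH(\wt\bh)$ and for $\1_{(0,\beta)}\calL\1_{(0,\beta)}$) that do not require the weight $w$. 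Both crucially use the bounded support of $w$: you use it to localise the Taylor approximation, they use it to choose $\beta$ in $e^{-\beta x}/x$.
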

\begin{proof}
We will need one well-known statement: if $\bb$ is a restriction of a
Schwartz class function onto $\bbR_+$, then the integral Hankel operator $\bH(\bb)$ is
in $\Sch_0$. This fact follows easily from Theorem~\ref{thm.b3} below.

\emph{Step 1:}
First we would like to replace $\bz(1+x)$ in the integral kernel of $w^{1/2}\bH(\bz_1)w^{1/2}$
by a simpler function $\bh$ with the same singularity at $x=0$.
Let $\beta >0$ be sufficiently large so that $\supp w\subset[0,\beta]$;  we choose
$$
\bh(x)=e^{-\beta x}/x, \quad x>0.
$$
Our aim at this step is to prove that the error term arising through this replacement is negligible, i.e. 
$$
w^{1/2}\bH(\bz_1)w^{1/2}-w^{1/2}\bH(\bh)w^{1/2}\in \Sch_0.
$$
Since the zeta function $\bz(z)$ has a simple pole at $z=1$ with residue one and 
converges to $1$ as $O(2^{-z})$ when $z\to+\infty$, we conclude
that the function
$$
\wt \bh(x)=\bz_1(x)-\bh(x)-1, \quad x>0,
$$
is a restriction of a Schwartz class function onto $\bbR_+$.
It follows that $\bH(\wt\bh)\in\Sch_0$.
Thus,
$$
w^{1/2}\bH(\bz_1)w^{1/2}-w^{1/2}\bH(\bh)w^{1/2}
=
w^{1/2}\bH(\wt \bh)w^{1/2}+(\cdot,w^{1/2})w^{1/2}\in\Sch_0;
$$
here the last term is the rank one operator with the integral kernel $w(x)^{1/2}w(y)^{1/2}$.

\emph{Step 2:}
Now it remains to prove that
\[
w^{1/2}\bH(1/x) w^{1/2}
-
w^{1/2}\bH(\bh) w^{1/2}
\in\Sch_0.
\label{cc3}
\]
Let $\calL$ be the Laplace transform in $L^2(\bbR_+)$,
$$
\calL [f](x)=\int_0^\infty e^{-x\lambda}f(\lambda)d\lambda.
$$
Observe that $1/x-\bh(x)=\calL[\1_{(0,\beta)}](x)$, where
$\1_{(0,\beta)}$ is the characteristic function of the interval $(0,\beta)$.
Thus, the operator in \eqref{cc3} can be written as
$w^{1/2}\calL \1_{(0,\beta)}\calL w^{1/2}$.

Since $w$ is bounded and $w^{1/2}=w^{1/2}\1_{(0,\beta)}$, it suffices to prove
that $\1_{(0,\beta)}\calL\1_{(0,\beta)}\in\Sch_0$.
Let $U$ be the unitary operator
$$
U: L^2(0,\beta)\to L^2(\bbR_+), \quad
Uf(x)=\sqrt{\beta}e^{-x/2}f(\beta e^{-x}), \quad x>0.
$$
A straightforward calculation shows that
$$
U\1_{(0,\beta)}\calL\1_{(0,\beta)} U^*=\bH(\bk),
$$
where the kernel function $\bk$ is given by
$$
\bk(x)=\beta e^{-x/2}\exp(-\beta^2e^{-x}), \quad x>0.
$$
Clearly, $\bk$ is a Schwartz class function (to be precise, a restriction
of a Schwartz class function onto the positive half-axis).
Thus,  $\bH(\bk)\in\Sch_0$ and so, by unitary equivalence,
we obtain $\1_{(0,\beta)}\calL\1_{(0,\beta)}\in\Sch_0$.
\end{proof}

\subsection{Reduction to $\bM(\ba)$ and completing the proof}
\begin{lemma}\label{lma.cc5}
Let $w\in L^\infty\cap L^1$. Then $w^{1/2}\bH(1/x)w^{1/2}\geq0$ and 
$$
w^{1/2}\bH(1/x)w^{1/2}\approx \bM(\ba),
$$
with $\ba$ as in \eqref{cc1}.
\end{lemma}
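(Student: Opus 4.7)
The plan is to mirror the factorisation argument used in the proof of Lemma~\ref{lma.c1}, but with the integers replaced by the continuous variable $t\in(1,\infty)$. Specifically, I will introduce the operator
$$
\calK: L^2(\bbR_+)\to L^2(1,\infty), \qquad (\calK f)(t)=\int_0^\infty t^{-\frac12-x}w(x)^{1/2}f(x)\dm x,\quad t>1,
$$
defined initially on the dense subspace of $f\in L^2(\bbR_+)$ with compact support bounded away from $0$ (on which the integral and the resulting $t$-decay are unproblematic). I will then show the two factorisation identities
$$
\calK^*\calK=w^{1/2}\bH(1/x)w^{1/2},\qquad \calK\calK^*=\bM(\ba),
$$
from which the lemma follows at once via \eqref{b8}.

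The first identity is a direct Fubini calculation: for $f_1,f_2$ in the initial domain,
$$
(\calK f_1,\calK f_2)_{L^2(1,\infty)}
=\int_0^\infty\!\!\int_0^\infty w(x)^{1/2}w(y)^{1/2}f_1(x)\overline{f_2(y)}\left(\int_1^\infty t^{-1-x-y}\dm t\right)\dm x\dm y,
$$
and the inner integral equals $1/(x+y)$, giving precisely the integral kernel of $w^{1/2}\bH(1/x)w^{1/2}$. Since the Carleman operator $\bH(1/x)$ is bounded on $L^2(\bbR_+)$ with norm $\pi$ and $w^{1/2}\in L^\infty$, the right-hand side is a bounded non-negative operator; hence $\|\calK f\|^2=(\calK^*\calK f,f)\le\pi\|w\|_{L^\infty}\|f\|^2$ on the initial domain, so $\calK$ extends uniquely to a bounded operator.

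For the second identity, for $u\in L^2(1,\infty)$ one computes the adjoint
$$
(\calK^* u)(x)=w(x)^{1/2}\int_1^\infty t^{-\frac12-x}u(t)\dm t,
$$
and then, applying Fubini once more,
$$
(\calK\calK^* u)(t)=\int_1^\infty u(s)(ts)^{-1/2}\left(\int_0^\infty (ts)^{-\lambda}w(\lambda)\dm\lambda\right)\dm s=\int_1^\infty \ba(ts)u(s)\dm s,
$$
by the very definition \eqref{cc1} of $\ba$. This is exactly $\bM(\ba)u$.

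Having $\calK^*\calK=w^{1/2}\bH(1/x)w^{1/2}$ and $\calK\calK^*=\bM(\ba)$, both operators are non-negative (as $A^*A$ and $AA^*$), and the general relation \eqref{b8} yields the desired unitary equivalence modulo kernels. I do not anticipate a serious obstacle; the only mildly delicate point is justifying Fubini in the computation of $\calK^*\calK$, which is handled by the absolute integrability of $t^{-1-x-y}$ over $(1,\infty)$ for $x,y>0$ together with the compact-support assumption on $f_1,f_2$ (the case of general $w\in L^\infty\cap L^1$ is then obtained by the bounded extension of $\calK$).
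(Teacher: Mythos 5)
Your proof is correct and uses essentially the same factorisation-plus-$A^*A\approx AA^*$ argument as the paper. In the paper's proof, one writes $w^{1/2}\bH(1/x)w^{1/2}=(w^{1/2}\calL)(w^{1/2}\calL)^*$ via the Laplace transform $\calL$ and the identity $\bH(1/x)=\calL\calL$, flips the product to obtain $\bH(\calL[w])$, and only at the end applies the exponential change of variables $V$ of \eqref{d0} to identify this with $\bM(\ba)$; your operator $\calK$ is precisely $V(w^{1/2}\calL)^*=V\calL w^{1/2}$, so you have merely absorbed the final conjugation by $V$ into the definition of the factor, letting $\calK\calK^*$ land directly on $\bM(\ba)$. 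This packaging has the small merit of mirroring the proof of Lemma~\ref{lma.c1} verbatim, at the cost of a slightly longer direct computation of $\calK\calK^*$; the underlying mechanism (bounded factor, $\calK^*\calK$ versus $\calK\calK^*$, and the relation \eqref{b8}) is identical.
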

\begin{proof}
This argument is well known in the context of integral Hankel operators.
We have
$$
w^{1/2}\bH(1/x)w^{1/2}=w^{1/2}\calL \calL w^{1/2}=(w^{1/2}\calL)(w^{1/2}\calL)^*
\approx
(w^{1/2}\calL)^*(w^{1/2}\calL)
=\bH(\bb),
$$
where $\bb=\calL[w]$.
Now it remains to observe that, with $V$ as in \eqref{d0},  we have, by \eqref{aa9},
$$
V\bH(\bb)V^*=\bM(\ba),
$$
with
$$
\ba(t)=t^{-1/2}\bb(\log t)
=
t^{-1/2}\int_0^\infty e^{-\lambda\log t}w(\lambda) d\lambda
=
\int_0^\infty t^{-1/2-\lambda}w(\lambda)d\lambda,
$$
as required.
\end{proof}

\begin{proof}[Proof of Theorem~\ref{thm.cc1}]
Combining Lemmas~\ref{lma.c1}, \ref{lma.cc4} and \ref{lma.cc5}, we obtain
the required statement with $A=w^{1/2}\bH(\bz_1)w^{1/2}$ and $B=w^{1/2}\bH(1/x)w^{1/2}$.
\end{proof}

\section{The map $\bM(\ba)\mapsto M(r(\ba))$ is bounded in $\Sch_p$ for $p\leq1$}\label{sec.d}

\subsection{Overview}
Below for
$\bM(\ba)\in\Sch_p$, $0<p\leq1$, we will associate with $\ba$ its
restriction $r(\ba)$. In order for this restriction to make sense,
we need a preliminary statement, the proof of which is given in
Section~\ref{sec.d3}:

\begin{lemma}\label{lma.d0}
If $\bM(\ba)\in\Sch_1$, then the kernel function $\ba(t)$ is continuous
in $t>1$.
\end{lemma}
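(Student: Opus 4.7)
The plan is to reduce the statement to a continuity claim for integral Hankel operators via the exponential change of variables \eqref{aa9}, and then derive continuity of the Hankel symbol directly from its singular value decomposition.

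First, by \eqref{aa9}, the condition $\bM(\ba)\in\Sch_1$ is equivalent to $\bH(\bb)\in\Sch_1$ for $\bb(x)=e^{x/2}\ba(e^x)$, and since $\ba(t)=t^{-1/2}\bb(\log t)$ for $t>1$, continuity of $\ba$ on $(1,\infty)$ is equivalent to continuity of $\bb$ on $(0,\infty)$. Thus it suffices to show that $\bH(\bb)\in\Sch_1$ implies $\bb$ is continuous on $(0,\infty)$.

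For the latter, I would write the singular value decomposition $\bH(\bb)=\sum_n s_n(\cdot,\phi_n)\psi_n$, where $\sum_n s_n<\infty$ and $\{\phi_n\},\{\psi_n\}\subset L^2(\bbR_+)$ are orthonormal systems, extended by zero to $\bbR$. The integral kernel $\sum_n s_n\psi_n(x)\overline{\phi_n(y)}$ equals $\bb(x+y)$ a.e. Testing this identity against $\chi_{[a,b]}(x+y)$ for arbitrary $0<a<b$ (which lies in $L^2(\bbR_+\times\bbR_+)$ since the corresponding strip has finite measure), and then changing variables to $u=x+y$, one obtains
\begin{equation*}
\int_a^b t\,\bb(t)\,dt = \sum_n s_n \int_a^b (\psi_n\ast\overline{\phi_n})(t)\,dt = \int_a^b F(t)\,dt,
\end{equation*}
where $F(t):=\sum_n s_n(\psi_n\ast\overline{\phi_n})(t)$. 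The Cauchy--Schwarz bound $|(\psi_n\ast\overline{\phi_n})(t)|\le \norm{\psi_n}_{L^2}\norm{\phi_n}_{L^2}=1$ together with $\sum_n s_n<\infty$ justifies both the swap of sum and integral and the absolute convergence of the series $F(t)$. Since the resulting identity holds for every $0<a<b$, we deduce $t\,\bb(t)=F(t)$ almost everywhere.

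Finally, each convolution $\psi_n\ast\overline{\phi_n}$ is continuous on $\bbR$ by $L^2$-continuity of translation, and the Weierstrass $M$-test makes the series defining $F$ uniformly convergent; hence $F$ is continuous on $\bbR$, and $\bb(t)=F(t)/t$ has a continuous representative on $(0,\infty)$. The only delicate point in the argument is the passage from the $L^2$-sense SVD expansion to a genuinely pointwise identity along the diagonals $x+y=t$; this is what the indicator-testing step is designed to handle, and beyond it everything reduces to Cauchy--Schwarz and the Weierstrass $M$-test.
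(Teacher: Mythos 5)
Your proof is correct, but it takes a genuinely different route from the paper's. The paper first reduces to the Hankel case via \eqref{aa9} exactly as you do, but then invokes Peller's characterisation (Theorem~\ref{thm.b3}): taking the dyadic pieces $\bb_n=\bb\,\bw_n$, the $p=1$ case of \eqref{cb8} forces $\wh\bb_n\in L^1(\bbR)$ for every $n$, so each $\bb_n$ is continuous as the inverse Fourier transform of an $L^1$ function; since at every point $x>0$ the series $\bb=\sum_n\bb_n$ has at most two nonzero terms, $\bb$ is continuous. You instead bypass the Besov-space machinery entirely and work from the singular value decomposition: identifying the Hilbert--Schmidt kernel $\bb(x+y)$ with $\sum_n s_n\,\psi_n(x)\overline{\phi_n(y)}$ in $L^2(\bbR_+^2)$, integrating against $\chi_{[a,b]}(x+y)$ to obtain $t\,\bb(t)=\sum_n s_n(\psi_n\ast\overline{\phi_n})(t)$ a.e., and using the continuity of $L^2\ast L^2$ convolutions together with the Weierstrass $M$-test. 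Your verification of the interchanges (the $L^2$-testing step, the Fubini step, the $\sup$-bound $\abs{(\psi_n\ast\overline{\phi_n})(t)}\le1$) is sound. The trade-off is that your argument is self-contained modulo basic trace-class facts and exposes the Hankel convolution structure directly, whereas the paper's one-line proof is a corollary of the heavier Theorem~\ref{thm.b3}, which is needed elsewhere in the paper anyway.
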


Before continuing, let us fix some notation:
throughout the remainder $C_p$ (or occasionaly $C'_p$) will denote
a constant which only depends on $p$ but whose precise value may change
from line to line.

Our main result in this section is:

\begin{theorem}\label{thm.a2}
\begin{enumerate}[\rm (i)]
\item
Assume that  $\bM(\ba)$ is bounded and belongs to the Schatten class $\Sch_p$
with $0<p\leq1$. Then $M(r(\ba))$ is also in $\Sch_p$, with the norm bound
\[
\norm{M(r(\ba))}_{\Sch_p}\ls_p \norm{\bM(\ba)}_{\Sch_p}.
\label{d00}
\]
\item
Assume that  $\bM(\ba)$ is bounded and belongs to the Schatten-Lorentz
class $\Sch_{p,q}$ with $0<p<1$ and $1\leq q\le\infty$.
Then $M(r(\ba))$ is also in $\Sch_{p,q}$, with the norm bound
$$
\norm{M(r(\ba))}_{\Sch_{p,q}}\ls_p \norm{\bM(\ba)}_{\Sch_{p,q}}.
$$
\end{enumerate}
\end{theorem}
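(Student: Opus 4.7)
My plan is to prove Part (i) for $0 < p \le 1$ directly and then deduce Part (ii) by real interpolation. First, Lemma~\ref{lma.d0} follows from the fact that a trace-class integral operator on $L^2(1,\infty)$ admits a kernel agreeing a.e.\ with a continuous function; since this kernel must equal $\ba(ts)$ a.e.\ and the map $(t,s)\mapsto ts$ is a continuous surjection $(1,\infty)^2 \to (1,\infty)$, continuity of $\ba$ follows by restricting the kernel to any curve such as $s = u_0/t$. Since $\Sch_p \subset \Sch_1$ for $p \le 1$, $M(r(\ba))$ is entrywise well-defined throughout.

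For the bound in Part (i), the natural tool is the nuclear-type characterization valid for $0 < p \le 1$: from the singular value decomposition together with \eqref{a10},
$$
\|T\|_{\Sch_p}^p = \inf \sum_n \|\phi_n\|_{L^2}^p \|\psi_n\|_{L^2}^p,
$$
the infimum being taken over all decompositions $T = \sum_n \phi_n\otimes\psi_n$. A near-optimal decomposition for $\bM(\ba)$ gives $\ba(ts)=\sum_n \phi_n(t)\overline{\psi_n(s)}$ in $L^2((1,\infty)^2)$, but naively passing to $M(r(\ba)) = \sum_n (\phi_n|_{\bbN})\otimes(\psi_n|_{\bbN})$ fails because $L^2$ functions lack pointwise values.

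To bypass this, I would transfer to the Hankel side via \eqref{aa9}: we have $\bM(\ba)\in\Sch_p$ iff $\bH(\bb)\in\Sch_p$, and
$$
M(r(\ba))_{jk} = (jk)^{-1/2}\,\bb(\log j+\log k) = (DND)_{jk}
$$
with $D = \diag(j^{-1/2})$ bounded on $\ell^2(\bbN)$ and $N_{jk} = \bb(\log j + \log k)$. The inequality $\|DND\|_{\Sch_p} \le \|D\|^2\,\|N\|_{\Sch_p}$ reduces the question to a sampling/restriction estimate for the integral Hankel operator $\bH(\bb)$ at the non-uniform set $\{\log j\}_{j\ge 2} \subset \bbR_+$. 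Classical discretization results of Peller--Semmes type handle separated sampling; here separation fails at infinity since $\log(j+1) - \log j \sim 1/j$. I would therefore decompose $\bH(\bb)$ dyadically into pieces supported where $x + y \in [2^m, 2^{m+1}]$: on each piece the relevant log-integer samples are separated at scale $\sim 2^{-m}$, so a local restriction estimate applies, and the pieces are reassembled in $\Sch_p$ via the quasi-triangle inequality \eqref{a10}.

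The main obstacle is controlling this dyadic reassembly without logarithmic loss; this likely requires smooth cutoffs compatible with the multiplicative dyadic scale (matching the exponential change of variables \eqref{aa9} used elsewhere in the paper), and uniform $\Sch_p$-norm control of the local sampling maps. Once Part (i) is established for all $p \in (0,1]$, Part (ii) follows by real interpolation: for any $0 < p < 1$ and $1 \le q \le \infty$, one writes $\Sch_{p,q} = (\Sch_{p_0}, \Sch_{p_1})_{\theta,q}$ for suitable endpoints $0 < p_0 < p < p_1 \le 1$ with $\theta$ determined by $1/p = (1-\theta)/p_0 + \theta/p_1$, and Part (i) provides the required bounds at both endpoints.
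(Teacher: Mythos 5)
Your high-level outline (reduce to the Hankel side by the exponential change of variable, decompose dyadically, estimate each piece, reassemble by the $\Sch_p$ quasi-triangle inequality \eqref{a10}, and deduce (ii) by real interpolation) matches the skeleton of the paper's proof, but there are several genuine gaps.

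First, your argument for Lemma~\ref{lma.d0} does not work: a trace-class integral operator need \emph{not} have a kernel agreeing a.e.\ with a continuous function (consider $\phi\otimes\phi$ with $\phi\in L^2$ discontinuous), so you cannot "restrict the kernel to a curve." The paper instead uses Peller's Besov characterisation (Theorem~\ref{thm.b3}): $\bH(\bb)\in\Sch_1$ forces $\wh\bb_n\in L^1$ for every $n$, hence each Littlewood--Paley piece $\bb_n$ is continuous, and the sum \eqref{c3a} is locally finite.

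Second, in Part~(i) your quantitative claim about the sampling set is wrong. On the dyadic block where $x\in[2^m,2^{m+1}]$, the sample points are $\log j$ with $j\sim e^{2^m}$, so consecutive gaps are $\log(j+1)-\log j\sim 1/j\sim e^{-2^m}$: the separation degrades \emph{doubly exponentially}, not like $2^{-m}$. A separation-based discretisation lemma "of Peller--Semmes type" applied directly to this set will not close the argument, and you yourself note you do not know how to control the reassembly. The paper's key device, which you have not identified, is to \emph{dualise to the Fourier side}: writing $\ba_n(t)=\int v(\xi)\,t^{-1/2+2\pi i\xi}\,d\xi$ with $v=\wh\bb_n$ supported by a band of length $N\sim 2^n$, one expands in the \emph{uniformly spaced} frequencies $m/N$, applies the Plancherel--P\'olya inequality to control $\sum_m|v(m/N)|^p$, and sums the resulting rank-one matrices $\{(jk)^{-1/2+2\pi i m/N}\}$, each of whose $\Sch_p$-quasinorm is $\lesssim N$ (this is Lemma~\ref{lma.d5}). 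Combined with Peller's $\Sch_p$ criterion \eqref{cb8}, this gives Part~(i).

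Third, your derivation of Part~(ii) is incomplete as stated. Real interpolation is applied to the linear map $\bM(\ba)\mapsto M(r(\ba))$, whose domain is the space of integral \emph{Helson} operators, not all of $\Sch_p$. You therefore need to know that $(\bM\Sch_{p_0},\bM\Sch_{p_1})_{\theta,q}=\bM\Sch_{p,q}$, i.e.\ that the interpolation space of Helson operators is again a Helson class; this is nontrivial and relies on Peller's interpolation theorem for Hankel operators \eqref{HSp interpolation} transported via \eqref{aa9} (cf.\ \eqref{MSp interpolation}--\eqref{MSp interpolation 2}). Simply writing $\Sch_{p,q}=(\Sch_{p_0},\Sch_{p_1})_{\theta,q}$ is not enough.
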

We will only need the case $q=\infty$ of part (ii) of the theorem. 

In Sections~\ref{SP estimate}--\ref{sec.c4} after some preliminaries, we prove 
part (i) of the theorem. The proof uses V.~Peller's description of Hankel operators
of the class $\Sch_p$, $0<p\leq1$. 
In Sections~\ref{sec.c5}--\ref{sec.c6} we use real interpolation to deduce part (ii) 
of the theorem.

\subsection{Eigenvalue estimates for integral Hankel operators}\label{SP estimate}

For $\bbf\in L^1(\bbR)+L^2(\bbR)$, its Fourier transform is defined as usual by
\begin{equation*}
\wh\bbf(\xi):=\int_{-\infty}^\infty
\bbf(x)e^{-2\pi ix\xi}\,dx, \quad \xi\in\bbR.
\end{equation*}
Throughout this section, we let $\bw\in C^\infty_0(\bbR)$ be a function with
the properties $\bw\geq0$,  $\supp \bw=[1/2,2]$ and
$$
\sum_{n=-\infty}^\infty \bw(x/2^n)=1, \quad \text{for all } x>0.
$$
For $n\in\bbZ$, let $\bw_n(x)=\bw(x/2^n)$ and for a function
$\bb\in L^1_\loc(\bbR_+)$ set
\begin{equation}
\bb_n(x):=\bb(x)\bw_n(x), \quad x\in\bbR,
\label{d11}
\end{equation}
so that
$$
\wh{\bb}_n(\xi) = (\wh\bb\ast\wh\bw_n)(\xi), \quad \xi\in\bbR,
$$
where $*$ denotes convolution.
Clearly, we have
\[
\bb(x)=\sum_{n=-\infty}^\infty\bb_n(x), \quad x>0,
\label{c3a}
\]
where for every $x>0$, at most two terms of the series are non-zero.

Let us recall the necessary and sufficient conditions for
the Schatten class inclusion $\bH(\bb)\in \Sch_p$.

\begin{theorem}\cite[Theorem~6.7.4]{Peller}\label{thm.b3}
Let $\bb\in L^1_{\rm loc} (\bbR_{+})$ and let $p>0$.
The estimate
\begin{equation}
C_p\norm{\bH(\bb)}_{\Sch_p}^p
\le
\sum_{n=-\infty}^\infty 2^n \norm{\wh\bb_n}^p_{L^p(\bbR)}
\le C'_p
\norm{\bH(\bb)}_{\Sch_p}^p
\label{cb8}
\end{equation}
holds,
so that $\bH(\bb)\in \Sch_p$ if and only if  the series in
\eqref{cb8} converges.
\end{theorem}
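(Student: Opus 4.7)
My plan is to exploit the dilation symmetry of integral Hankel operators to reduce \eqref{cb8} to a \emph{model estimate} for kernels supported on the fixed interval $[1/2,2]$, and then sum the Littlewood--Paley pieces. Introduce the unitary dilation $(D_tf)(x)=t^{1/2}f(tx)$ on $L^2(\bbR_+)$. A one-line change of variable gives $D_t\bH(\bb)D_t^*=\bH(\bc)$ with $\bc(x)=t\bb(tx)$, so the Schatten norm is dilation invariant. Applying this with $t=2^n$ to the piece $\bb_n$ from \eqref{d11} (supported in $[2^{n-1},2^{n+1}]$) produces $\bc_n(x)=2^n\bb_n(2^n x)$ supported on the universal window $[1/2,2]$. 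A routine scaling of the Fourier transform under dilation yields $\norm{\wh\bc_n}_{L^p}^p = 2^n\norm{\wh\bb_n}_{L^p}^p$, so the middle member of \eqref{cb8} equals $\sum_n\norm{\wh\bc_n}_{L^p}^p$. The task thus reduces to the model equivalence
\[
\norm{\bH(\bc)}_{\Sch_p}^p\asymp\norm{\wh\bc}_{L^p}^p
\]
(uniformly over $\bc$ supported in $[1/2,2]$), together with a mechanism for gluing the pieces.

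For the upper bound in the model estimate, I would invert the Fourier transform to write $\bc(x)=\chi(x)\int\wh\bc(\xi)e^{2\pi ix\xi}d\xi$ with $\chi$ a fixed smooth cutoff to $[1/2,2]$. This exhibits $\bc$ as a continuous superposition of modulated cutoffs $\chi(x)e^{2\pi ix\xi}$, and a direct computation shows that the Hankel operator associated to such a modulated cutoff has $\Sch_p$-norm bounded \emph{uniformly} in $\xi$ (indeed it is a smoothing, essentially rank-one object, since $\chi$ is smooth and supported away from $0$). The quasi-triangle inequality \eqref{a10} then delivers $\norm{\bH(\bc)}_{\Sch_p}^p\le C_p\norm{\wh\bc}_{L^p}^p$. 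The global upper bound follows by another application of \eqref{a10} to $\bH(\bb)=\sum_n\bH(\bb_n)$, exploiting that at each point the series has at most two non-vanishing terms.

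The substantive content is the lower bound. Here the classical route is to identify $\bH(\bb)$ (via the Laplace transform or, equivalently, the unitary $V$ of \eqref{d0} followed by a boundary-value map) with a Hankel-type operator on the Hardy space of the upper half-plane whose analytic symbol is (a multiple of) the Laplace transform of $\bb$. Peller's characterisation then asserts that $\Sch_p$-membership of the Hankel operator is equivalent to Besov-space membership $B_p^{1/p}$ of the symbol, and the Besov norm $\norm{\cdot}_{B_p^{1/p}}^p$ admits exactly the Littlewood--Paley expansion appearing on the right of \eqref{cb8}. Concretely, one proves the lower bound by an atomic decomposition of $\bH(\bb)$ into Schatten-convergent sums of rank-one building blocks whose dyadic Fourier masses are individually controlled by their $\Sch_p$-norms; summing over scales recovers the right-hand side of \eqref{cb8}.

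The main obstacle is precisely this lower bound in the range $0<p\le 1$, where $L^p$-boundedness of the analytic (Riesz) projection breaks down and must be replaced by the atomic characterisation of the real Hardy space $H^p$. The atoms are forced to respect both the Hankel structure (which compels one-sided Fourier support) and the cancellation conditions intrinsic to $H^p$, which is technically the delicate part. Since \eqref{cb8} is invoked in the paper purely as a black-box input from \cite[Theorem~6.7.4]{Peller}, for our purposes it suffices to record that both directions are available in the stated form.
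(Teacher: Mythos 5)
The paper does not prove this statement; it is invoked verbatim as a black box from \cite[Theorem~6.7.4]{Peller}, so there is no internal proof to compare against. Your sketch does lay out the standard outline of how such a result is proved, and the dilation reduction is clean and correct: the identity $D_t\bH(\bb)D_t^*=\bH(\bc)$ with $\bc(x)=t\bb(tx)$ checks out, as does the Fourier-scaling identity $\norm{\wh\bc_n}_{L^p}^p=2^n\norm{\wh\bb_n}_{L^p}^p$, and the observation that $\bH(\chi e^{2\pi i(\cdot)\xi})=M_\xi\bH(\chi)M_\xi$ with $M_\xi$ unitary is exactly the reason the model building blocks have $\Sch_p$-norm independent of $\xi$.

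There is, however, a genuine gap in your proof of the left inequality of \eqref{cb8} (the ``upper bound'' in your terminology) for $0<p<1$: you express $\bc$ as a \emph{continuous} superposition $\int\wh\bc(\xi)\,\chi(x)e^{2\pi ix\xi}\,d\xi$ and then apply the quasi-triangle inequality \eqref{a10}. But \eqref{a10} holds for finite or countable \emph{sums}; it does not extend to integrals when $p<1$ (the map $A\mapsto\norm{A}_{\Sch_p}^p$ is subadditive but not convex, so Jensen-type passage from sums to integrals fails). This is precisely the obstruction the present paper confronts in Lemma~\ref{lma.d5}, where the fix is to discretize: since $\bc$ is supported in a bounded interval, one expands it in a Fourier series $\bc(x)=\tfrac1N\sum_m\wh\bc(m/N)\,\chi(x)e^{2\pi imx/N}$, applies \eqref{a10} to the resulting countable sum of rank-controlled pieces, and controls $\sum_m\abs{\wh\bc(m/N)}^p$ by $\norm{\wh\bc}_{L^p}^p$ via the Plancherel--P\'olya inequality. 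Your argument would go through if you replaced the integral superposition with this sampled sum. Your treatment of the converse inequality (the right-hand side of \eqref{cb8}) is an honest acknowledgement that this is the deep part of Peller's theorem --- requiring the $H^p$ atomic machinery for $p\le1$ --- and punting on it is reasonable given that the paper itself treats the whole theorem as an imported result.
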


The convergence of the series in \eqref{cb8} means that $\wh \bb$
belongs to the homogenous Besov class $B^{1/p}_{p,p}(\bbR)$.

\subsection{Preliminary statements}\label{sec.d3}

\begin{proof}[Proof of Lemma~\ref{lma.d0}]
Using the unitary equivalence \eqref{aa9} reduces the question to the following one:
if $\bH(\bb)\in\Sch_1$, then the kernel function $\bb(x)$ is continuous in $x>0$.
This statement is known, and the proof is evident:
if $\bb_n$ is as in \eqref{d11}, then 
by Theorem~\ref{thm.b3}, we have $\wh\bb_n\in L^1(\bbR)$ for all $n$, and so
in the series \eqref{c3a} each function $\bb_n$ is continuous. 
\end{proof}

A key ingredient of the proof of Theorem \ref{thm.a2} is a (scaled) classical
inequality of Plancherel and Polya \cite{Plan-Pol,Eoff} which states
that if $\bbf\in L^p(\bbR)$ for $p>0$  and $\supp\wh \bbf\subset[0,N]$, $N>0$,
then 
$$
\sum_{m=-\infty}^\infty\abs{\bbf(m/N)}^p  \ls_p N\norm{\bbf}^p_{L^p(\bbR)}.
$$
\begin{lemma}\label{lma.d5}
Let $v\in L^1(\bbR)\cap L^p(\bbR)$, $0<p\leq1$, and assume that the function
\[
\ba(t)=\int_{-\infty}^\infty v(\xi)t^{-\frac12+2\pi i \xi}d\xi,\quad t>0,
\label{d3}
\]
satisfies the condition $\supp\ba\subset[1,e^N]$ for some $N\in\bbN$.
Then for $a=r(\ba)$, the Helson matrix $M(a)$ satisfies the estimate
$$
\norm{M(a)}_{\Sch_p}^p\leq C_pN\norm{v}_{L^p(\bbR)}^p.
$$
\end{lemma}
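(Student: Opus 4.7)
The plan is to represent $M(a)$ as a Bochner integral in $\Sch_1$ of rank-one character tensors, discretise this integral at the natural scale $1/N$ set by the Paley--Wiener band-limitedness of $v$, and then combine the $\Sch_p$ quasi-triangle with the Plancherel--Polya inequality.

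I would first introduce the truncated character vectors $u_\xi \in \ell^2(\bbN)$ given by $u_\xi(j) = j^{-1/2+2\pi i\xi}\mathbf{1}_{j \leq \lfloor e^N\rfloor}$, for which $\|u_\xi\|_{\ell^2}^2 \leq C(N+1)$ by the harmonic sum. Since $\supp\ba \subset [1,e^N]$ forces $a(jk)=0$ whenever $jk > e^N$, a short computation matching entries on both sides gives
\begin{equation*}
M(a) + \ba(1)\, e_1 \otimes e_1 \;=\; \int_\bbR v(\xi)\, u_\xi \otimes u_\xi\, d\xi,
\end{equation*}
with the rank-one correction accounting for the single discrepancy at $(j,k)=(1,1)$ (where $r(\ba)(1)=0$ but the integrand contributes $\ba(1)$); the integral converges as a Bochner integral in $\Sch_1$ since $v \in L^1$. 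Partitioning $\bbR$ into intervals $J_m = [m/N,(m+1)/N)$ and writing the integral as $\sum_m A_m$, the $\Sch_p$ quasi-triangle \eqref{a10} gives $\|M(a)+\ba(1)e_1\otimes e_1\|_{\Sch_p}^p \leq \sum_m\|A_m\|_{\Sch_p}^p$.

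The crux is the uniform estimate $\|A_m\|_{\Sch_p}\leq C|v(m/N)|$. I would decompose $A_m = v(m/N)P_m + E_m$ with $P_m = \int_{J_m}u_\xi\otimes u_\xi\,d\xi$. From the multiplicative covariance $u_{m/N+\eta} = \diag(j^{2\pi im/N})\,u_\eta$ one sees that $P_m$ and $P_0$ have the same singular values. Writing $P_0 = SS^*$ with $S:L^2(J_0)\to\ell^2$ by $Sg = \int g(\xi)u_\xi\,d\xi$, the associated Gram kernel $K(\xi,\eta) = \langle u_\eta,u_\xi\rangle = \sum_{j\leq e^N}j^{-1+2\pi i(\eta-\xi)}$ is (by a standard estimate of incomplete Dirichlet sums) a Dirichlet-type kernel of height $\sim N$ and width $\sim 1/N$; on the interval $J_0$ of length $1/N$ this makes $S^*S$ essentially rank-one, giving $\|P_0\|_{\Sch_p}=O(1)$. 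The error term $E_m$ is controlled using Bernstein's inequality for the Paley--Wiener function $v$, exploiting the oscillatory cancellation of the rank-one tensor family against $v(\xi)-v(m/N)$ on $J_m$. Summing and applying Plancherel--Polya yields $\sum_m|v(m/N)|^p \leq C_pN\|v\|_{L^p}^p$, which gives the desired estimate modulo the rank-one correction. For the latter, $|\ba(1)|\leq \|v\|_{L^1}$ together with the Nikol'skii inequality $\|v\|_{L^1}\leq CN^{1/p-1}\|v\|_{L^p}$ (a direct consequence of Plancherel--Polya applied to the Paley--Wiener function $v$) give $|\ba(1)|^p\leq CN^{1-p}\|v\|_{L^p}^p \leq CN\|v\|_{L^p}^p$.

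The main obstacle is establishing the uniform bound $\|P_0\|_{\Sch_p}\leq C$ rigorously in the $\Sch_p$ quasi-norm for $p<1$, since naive Bochner triangle bounds in $\Sch_p$ break down in this regime and only yield $N^p\|v\|_{L^p}^p$. The cleanest route is probably to reinterpret $P_0$ via the exponential change of variables \eqref{aa9} as an integral Hankel operator with a specific smooth compactly supported kernel, and then invoke Peller's characterisation (Theorem~\ref{thm.b3}) to control its Schatten norm.
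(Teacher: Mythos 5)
Your overall framework (discretise at scale $1/N$, combine the $\Sch_p$ quasi-triangle inequality with the Plancherel--Polya bound $\sum_m|v(m/N)|^p\leq C_pN\|v\|_{L^p}^p$, and use that each rank-one character tensor has trace norm $\leq N+1$) correctly identifies the right scale and the right tools, and these ingredients do all appear in the paper's proof. However, the step you yourself flag as the ``main obstacle'' is a genuine gap that your sketch does not close, and the fix you suggest does not work.

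The difficulty is the uniform bound $\|P_0\|_{\Sch_p}\leq C$, with $P_0=\int_{J_0}u_\xi\otimes u_\xi\,d\xi$, $|J_0|=1/N$. The heuristic that $P_0$ is ``essentially rank one'' (Dirichlet-kernel of height $\sim N$, width $\sim 1/N$, on a window of width $1/N$) is suggestive, but for $p<1$ the $\Sch_p$ quasi-norm is dominated by the tail singular values, and ``close to rank one in operator norm'' gives no control there. Moreover, the closing suggestion to read $P_0$ as an integral Hankel operator and invoke Theorem~\ref{thm.b3} cannot be carried out: $P_0$ acts on $\ell^2$ with matrix entries $(P_0)_{jk}=(jk)^{-1/2}\int_{J_0}(j/k)^{2\pi i\xi}\,d\xi$, which depend on the \emph{ratio} $j/k$, so $P_0$ is neither a Hankel nor a Helson matrix and Peller's criterion does not apply to it. Similarly, the error terms $E_m$ controlled via Bernstein's inequality are asserted but not estimated, and it is not clear they would be summable in $\Sch_p$ for small $p$.

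The paper sidesteps this entirely by never approximating the Bochner integral. Because $\bb(x)=e^{x/2}\ba(e^x)$ is supported in $[0,N]$ and $\wh\bb=v$, the Fourier expansion of $\bb$ on $[0,N]$ (equivalently, the Paley--Wiener sampling formula) gives the \emph{exact} identity
\[
\ba(t)=\frac{1}{N}\sum_{m\in\bbZ}v(m/N)\,t^{-\frac12+2\pi i\frac{m}{N}},\qquad t>1,
\]
with absolute convergence guaranteed by Plancherel--Polya at $p=1$. Restricting to integers then expresses $M(r(\ba))$ as a genuine (not approximate) sum of rank-one matrices $\frac{1}{N}v(m/N)\{(jk)^{-1/2+2\pi im/N}\}_{j,k}$, each of $\Sch_p$ quasi-norm at most $N+1$, and the quasi-triangle inequality plus Plancherel--Polya finish the proof with no $P_0$-type operator and no error terms. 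Your rank-one correction at $(j,k)=(1,1)$ is a careful point the paper glosses over, and your Nikol'skii bound handles it correctly, but it is ancillary. The essential missing idea is to replace the partition-and-approximate scheme by the exact sampling expansion of the band-limited function $\bb$, which converts the integral into a sum of honest rank-one terms for free.
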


\begin{proof}
Condition $\supp\ba\subset[1,e^N]$ means that we may regard $M(a)$ as
an $[e^N]\times [e^N]$ matrix ($[e^N]$ is the integer part of $e^N$);
we will use this throughout the proof.

1)
Let $p=1$.
Equation \eqref{d3} implies that
\[
a(jk)=\int_{-\infty}^\infty v(\xi) (jk)^{-\frac12+2\pi i \xi}d\xi.
\label{d5}
\]
This can be interpreted as an integral representation for $M(a)$ in terms of
rank one $[e^N]\times [e^N]$ matrices $\{(jk)^{-\frac12+2\pi i \xi}\}_{j,k=1}^{[e^N]}$.
The trace norm of these rank one matrices  is easy to compute:
$$
\Norm{\{(jk)^{-\frac12+2\pi i \xi}\}_{j,k=1}^{[e^N]}}_{\Sch_1}
=
\sum_{j=1}^{[e^N]}\abs{j^{-\frac12+2\pi i \xi}}^2
=
\sum_{j=1}^{[e^N]}\frac1j\leq 1+N.
$$
Substituting this estimate into \eqref{d5}, we get
$$
\norm{M(a)}_{\Sch_1}
\leq
\int_{-\infty}^\infty \abs{v(\xi)}\Norm{\{(jk)^{-\frac12+2\pi i \xi}\}_{j,k=1}^{[e^N]}}_{\Sch_1}d\xi
\leq
(N+1)\norm{v}_{L^1(\bbR)}.
$$

2) Let $0<p<1$.
Since the triangle inequality in $\Sch_p$ is no longer valid in this case, we have to use the
modified triangle inequality \eqref{a10}.
This forces us to use sums instead of integrals in estimates.
In particular, we need a series representation substitute for \eqref{d5}.
We claim that $\ba$ can be represented as
\[
\ba(t)=\frac1N\sum_{m=-\infty}^\infty v(m/N)t^{-\frac12+2\pi i\frac{m}N}, \quad t>1,
\label{d6}
\]
where the series converges absolutely and satisfies
\[
\sum_{m=-\infty}^\infty \abs{v(m/N)}^p\leq C_pN\norm{v}_{L^p(\bbR)}^p.
\label{d7}
\]
In order to justify this, we set $\bb(x)=e^{x/2}\ba(e^{x})$; then \eqref{d3} means
that $\wh\bb=v$.
Since $\supp\bb\subset[0,N]$, we can expand $\bb$ in the orthonormal basis
$$
N^{-1/2}e^{2\pi ix \frac{m}N}, \quad m\in\bbZ
$$
in $L^2(0,N)$.
This yields
$$
\bb(x)=\frac1N\sum_{m=-\infty}^\infty e^{2\pi ix \frac{m}N}\int_0^N \bb(y)e^{-2\pi i y\frac{m}N}dy
=\frac1N\sum_{m=-\infty}^\infty e^{2\pi i x\frac{m}N}v(m/N).
$$
Changing the variable $x=\log t$ and coming back to $\ba(t)$, we obtain \eqref{d6}.
Since $\supp\bb\subset[0,N]$, we can apply the Plancherel-Polya inequality, which gives \eqref{d7}.
The same inequality with $p=1$ ensures the absolute convergence of the series in \eqref{d6}
and justifies the above calculation.

3)
The representation \eqref{d6} yields
$$
a(jk)=\frac1N\sum_{m=-\infty}^\infty v(m/N)(jk)^{-\frac12+2\pi i\frac{m}{N}}, \quad j,k\in\bbN.
$$
This is an expansion of $M(a)$ in a series of rank one operators. As on step 1 of the proof, we have
$$
\Norm{\{(jk)^{-\frac12+2\pi i\frac{m}{N}}\}_{j,k=1}^N}_{\Sch_p}\leq(N+1).
$$
Applying the modified triangle inequality for $\Sch_p$ and using \eqref{d7}, we get
$$
\norm{M(a)}_{\Sch_p}^p
\leq
N^{-p}\sum_{m=-\infty}^\infty\abs{v(m/N)}^p(N+1)^p\leq C_p N\norm{v}_{L^p(\bbR)}^p,
$$
as required.
\end{proof}

\subsection{Proof of Theorem \ref{thm.a2}(i)}\label{sec.c4}

Let $\bb(x)=e^{x/2}\ba(e^x)$, $\bb_n(x)=\bb(x) \bw_n(x)$ and
$\ba_n(t)=t^{-1/2}\bb_n(\log t)$, $n\in\bbZ$, where $\bw_n$ are the functions
defined in Section~\ref{SP estimate}. Clearly, we have
\[
\ba(t)=\sum_{n=-\infty}^\infty \ba_n(t), \quad t>1.
\label{d9}
\]
From the unitary equivalence \eqref{aa9}, we see that
$\norm{\bM(\ba)}_{\Sch_p}=\norm{\bH(\bb)}_{\Sch_p}$.
Hence by Theorem \ref{thm.b3} we have
\[
\sum_{n=-\infty}^\infty 2^n\norm{\wh \bb_n}^p_{L^p(\bbR)}
\ls_p \norm{\bM(\ba)}_{\Sch_p}^p.
\label{estimate1}
\]

Fix $n\in\bbN$. We have $\supp\ba_n\subset[\exp(2^{n-1}),\exp(2^{n+1})]\subset[1,\exp(2^{n+1})]$,
and
$$
\ba_n(t)
=t^{-1/2}\bb_n(\log t)
=t^{-1/2}\int_{-\infty}^\infty \wh\bb_n(\xi)e^{i2\pi \xi\log t}d\xi
=\int_{-\infty}^\infty \wh\bb_n(\xi)t^{-\frac12+i2\pi \xi}d\xi.
$$
Also, by \eqref{estimate1} with $p=1$, we have $\wh \bb_n\in L^1(\bbR)$.
Thus, we can apply Lemma~\ref{lma.d5} with $N=2^{n+1}$, which yields
$$
\norm{M(r(\ba_n))}_{\Sch_p}^p \ls_p
2^n\norm{\wh \bb_n}^p_{L^p(\bbR)}.
$$
Now from \eqref{d9} we have
$$
M(r(\ba))=\sum_{n=-\infty}^\infty M(r(\ba_n));
$$
applying the modified triangle inequality \eqref{a10} for $\Sch_p$, we obtain
$$
\norm{M(r(\ba))}_{\Sch_p}^p
\leq
\sum_{n=-\infty}^\infty
\norm{M(r(\ba_n))}_{\Sch_p}^p
\leq
C_p
\sum_{n=-\infty}^\infty
2^n
\norm{\wh \bb_n}^p_{L^p(\bbR)}.
$$
Combining this with \eqref{estimate1}, we obtain the required estimate \eqref{d00}.
\qed

\subsection{Real interpolation}\label{sec.c5}

We now wish to show that the restriction map $\bM(\ba)\mapsto M(r(\ba))$
is bounded between the Schatten-Lorentz classes $\Sch_{p,q}$, when
$p<1$ and $1\leq q\le\infty$. To arrive at this we will use the real
interpolation method (the ``$K$-method''). We will quickly review
this, but refer the reader to \cite[\S 3.1]{Ber-Lof} for the details.

A pair of quasi-Banach spaces $(X_0,X_1)$ are called compatible if
they are both continuously included into the same Hausdorff
topological vector space. Real interpolation between a compatible
pair of quasi-Banach spaces $X_0$ and $X_1$ produces, for each
$0<\theta<1$ and $1\le q\le\infty$, an intermediate quasi-Banach
space which is denoted $(X_0,X_1)_{\theta,q}$ and which satisfies
$X_0\cap X_1\subseteq (X_0,X_1)_{\theta,q} \subseteq X_0 + X_1$,
with continuous inclusions.
In addition, if $(X_0,X_1)$ and $(Y_0,Y_1)$ are two pairs of
compatible quasi-Banach spaces and $A$ is a bounded linear map
from $X_0$ to $Y_0$ and from $X_1$ to $Y_1$ then $A$ will be
bounded from $(X_0,X_1)_{\theta,q}$ to $(Y_0,Y_1)_{\theta,q}$ for
each $0<\theta<1$ and $1\le q\le\infty$. 

An important result that we will make use of is the \emph{reiteration
theorem}: if $(X_0, X_1)$ are a compatible pair of quasi-Banach spaces,
then for $0\le\theta_0<\theta_1\le 1$ and $0<q_0,q_1<\infty$
\[
\left( (X_0, X_1)_{\theta_0,q_0}, (X_0, X_1)_{\theta_1,q_1}
\right)_{\theta,q}
= (X_0, X_1)_{(1-\theta)\theta_0 + \theta\theta_1,q},
\label{reiteration}
\]
where we interpret $(X_0,X_1)_{0,q}$ and $(X_0,X_1)_{1,q}$ to
be $X_0$ and $X_1$ respectively.

Of particular relevance
to us are the following interpolation spaces: for $0< p_0<p_1\le\infty$
and $0<q\le\infty$
\[
(\Sch_{p_0},\Sch_{p_1})_{\theta,q} = \Sch_{p,q},
\quad \frac{1}{p}=\frac{1-\theta}{p_0} + \frac{\theta}{p_1}.
\label{Sp interpolation}
\]

\subsection{Interpolation spaces of Hankel and Helson operators}\label{sec.c6}

Let $\bH\Sch_{p,q}$ denote the set of integral Hankel operators of
class $\Sch_{p,q}$, and let us write $\bH\Sch_p$ for $\bH\Sch_{p,p}$.
We claim that $\bH\Sch_{p,q}$ is a closed subspace of
$\Sch_{p,q}$ for all $0<p,q\le\infty$.
This is a straightforward consequence of the following characterisation
of integral Hankel operators \cite[Part B, Section 4.8, page 273]{Nikolski}. 
For $\lambda>0$, let
$S_\lambda$ denote the right shift by $\lambda$ on $L^2(\bbR_+)$
--- that is,
$$
S_\lambda:L^2(\bbR_+)\to L^2(\bbR_+), \quad
S_\lambda f(x) = 
\begin{cases}
f(x-\lambda), \quad x\ge\lambda, \\
0, \quad x<\lambda.
\end{cases}
$$
Then, for a bounded operator $A$ on $L^2(\bbR_+)$, one has
$A=\bH(\bb)$ for some distribution $\bb$ on $(0,\infty)$ if and only if 
$$
AS_\lambda = S_\lambda^*A \quad \text{for all}\quad \lambda>0.
$$

One has a description
for the interpolation spaces $(\bH\Sch_{p_0},\bH\Sch_\infty)_{\theta,q}$, see 
\cite[Theorem 6.4.1]{Peller}:
\[
(\bH\Sch_{p_0},\bH\Sch_\infty)_{\theta,q} = \bH\Sch_{p,q},
\quad p=\frac{p_0}{1-\theta}.
\label{HSp interpolation}
\]
It is worth noting that although \cite[Theorem 6.4.1]{Peller} is
stated for Hankel matrices, the same argument also works for
integral Hankel operators.

Similarly, let us write $\bM\Sch_{p,q}$ and $\bM\Sch_p$ to denote
the set of integral Helson operators of class $\Sch_{p,q}$ and $\Sch_p$
respectively. Since the unitary equivalence \eqref{aa9} provides
an isomorphism between $\bH\Sch_{p,q}$ and $\bM\Sch_{p,q}$ it
immediately follows from \eqref{HSp interpolation} that
\[
(\bM\Sch_{p_0},\bM\Sch_\infty)_{\theta,q} = \bM\Sch_{p,q},
\quad p=\frac{p_0}{1-\theta}.
\label{MSp interpolation}
\]

We are now in a position to conclude the proof of Theorem~\ref{thm.a2}.

\begin{proof}[Proof of Theorem~\ref{thm.a2}(ii)]

Fix $0<p_0<1$. Then by \eqref{MSp interpolation}, for any $p_1>p_0$,
we can write $\bM\Sch_{p_1} = (\bM\Sch_{p_0},\bM\Sch_\infty)_{\theta_1,p_1}$ for some $0<\theta_1<1$.
It then follows from the reiteration theorem \eqref{reiteration}
that
\[
(\bM\Sch_{p_0},\bM\Sch_{p_1})_{\theta,q}
= (\bM\Sch_{p_0},\bM\Sch_\infty)_{\theta\theta_1,q}
= \bM\Sch_{p,q},
\label{MSp interpolation 2}
\]
where $p$ is given by \eqref{Sp interpolation}.

By \eqref{d00}, the linear map $\bM(\ba)\mapsto M(r(\ba))$ is 
bounded from $\bM\Sch_{p_0}$ to $\Sch_{p_0}$ and from
$\bM\Sch_1$ to $\Sch_1$. It then follows from
\eqref{Sp interpolation} and \eqref{MSp interpolation 2}
that it is also bounded from $\bM\Sch_{p,q}$ to $\Sch_{p,q}$
for every $p_0<p<1$ and $1\leq q\le\infty$. This completes the proof.

\end{proof}

\section{Proof of Theorem~\ref{thm.a1}}\label{sec.e}

\subsection{Preliminaries}
Here we collect three results from other sources that will be needed below for the proof. 
The first one is the stability of the eigenvalue asymptotic coefficient, 
which is is standard in spectral perturbation theory.

\begin{lemma}\label{lma.b1}\cite[\S 11.6]{BSbook}
Let $A$ and $B$ be compact self-adjoint operators and let $\gamma>0$.
Suppose that
$s_n(A-B)=o(n^{-\gamma})$ as $n\to\infty$.
Then
\begin{align*}
\limsup_{n\to\infty}n^\gamma\lambda_n^+(A)
&=
\limsup_{n\to\infty}n^\gamma\lambda_n^+(B),
\\
\liminf_{n\to\infty}n^\gamma\lambda_n^+(A)
&=
\liminf_{n\to\infty}n^\gamma\lambda_n^+(B).
\end{align*}
\end{lemma}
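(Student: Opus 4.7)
The plan is to apply the Weyl (or Ky Fan) inequality for eigenvalues of sums of compact self-adjoint operators, which states that for any $n, m \geq 1$ and any compact self-adjoint $X, Y$,
\[
\lambda_{n+m-1}^+(X+Y) \leq \lambda_n^+(X) + \lambda_m^+(Y) \leq \lambda_n^+(X) + s_m(Y).
\]
Setting $C = A - B$ and applying this with $X = B$, $Y = C$ (and then swapping the roles of $A$ and $B$), we obtain the two-sided bound
\[
\lambda_{n+m-1}^+(B) - s_m(C) \leq \lambda_n^+(A) \leq \lambda_{n-m+1}^+(B) + s_m(C),
\]
valid for all $n > m \geq 1$.

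To prove the $\limsup$ assertion, I would fix $\epsilon \in (0,1)$ and choose $m = \lfloor \epsilon n \rfloor + 1$. Multiplying the right-hand inequality by $n^\gamma$ yields
\[
n^\gamma \lambda_n^+(A) \leq \left(\frac{n}{n-m+1}\right)^\gamma (n-m+1)^\gamma \lambda_{n-m+1}^+(B) + n^\gamma s_m(C).
\]
As $n \to \infty$ the prefactor tends to $(1-\epsilon)^{-\gamma}$, while the perturbation term $n^\gamma s_m(C)$ vanishes because $s_m(C) = o(m^{-\gamma})$ and $n/m$ stays bounded by $1/\epsilon + o(1)$. Taking $\limsup$ and then sending $\epsilon \to 0$ gives $\limsup n^\gamma \lambda_n^+(A) \leq \limsup n^\gamma \lambda_n^+(B)$; the reverse inequality is immediate by symmetry. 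The $\liminf$ assertion is handled by the same scheme applied to the left-hand bound, where the analogous prefactor $\left(\tfrac{n}{n+m-1}\right)^\gamma$ tends to $(1+\epsilon)^{-\gamma}$, which still converges to $1$ as $\epsilon \to 0$.

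I do not foresee any serious obstacle: this is a classical perturbation argument of Birman--Solomyak type, essentially laid out in the cited reference \cite[\S 11.6]{BSbook}. The only point worth emphasising is the role of the hypothesis $s_n(A-B) = o(n^{-\gamma})$ rather than $O(n^{-\gamma})$: the former ensures that the perturbation term $n^\gamma s_m(C)$ tends to zero for every fixed $\epsilon > 0$, which is precisely what allows one to send $\epsilon \to 0$ at the end of the argument. Under the weaker $O(n^{-\gamma})$ hypothesis, this term would contribute a non-vanishing remainder of order $\epsilon^{-\gamma}$, and one could only conclude coincidence of the asymptotic coefficients up to a multiplicative error.
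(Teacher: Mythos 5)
Your proof is correct. The paper itself does not prove this lemma but merely cites it to Birman--Solomyak \cite[\S 11.6]{BSbook}; your argument — the two-sided Weyl (Ky Fan) eigenvalue inequality with a sliding cut $m\approx\epsilon n$, followed by $n\to\infty$ and then $\epsilon\to 0$ — is exactly the classical argument laid out in that reference, and your remark on why the $o(n^{-\gamma})$ hypothesis (rather than $O$) is essential is also accurate.
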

Of course, similar relations hold true for negative eigenvalues $\lambda_n^-$.

Next, we need a result from \cite{PY1} on the spectral asymptotics of integral
Hankel operators. Roughly speaking, we need the eigenvalue asymptotics \eqref{a11} for 
integral Hankel operators $\bH(\bb)$ with the kernel as in \eqref{a8} --- this is one of the main 
results of \cite{PY1}. However, at the technical level, we need this result not for the kernel function $\bb$
of \eqref{a8}, but for the kernel function $\bb_0$ of \eqref{aa10}, which has the same asymptotics as $\bb$ but
is given by the suitable integral representation. This happens to be one of the intermediate 
results of \cite{PY1}, which fits our purpose. 
\begin{lemma}\cite[Lemma 3.2]{PY1}\label{lma.bb4}
Let $w(\lambda)=\abs{\log\lambda}^{-\alpha}\chi(\lambda)$, where
$\chi\in C^\infty(\bbR_+)$ is a non-negative function such that
$\chi(\lambda)=1$ near $\lambda=0$ and $\chi(\lambda)=0$
for $\lambda\geq1$. Consider the kernel function
$$
\bb_0(x)=\int_0^\infty w(\lambda)e^{-x\lambda}d\lambda,\quad x>0.
$$
Then the corresponding integral Hankel operator $\bH(\bb_0)$ is non-negative, compact
and has the spectral asymptotics
$$
\lambda_n^+(\bH(\bb_0))=\varkappa(\alpha)n^{-\alpha}+o(n^{-\alpha}), \quad n\to\infty.
$$
\end{lemma}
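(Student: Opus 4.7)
The plan is to reduce $\bH(\bb_0)$ through a sequence of unitary equivalences to a concrete sandwich operator on $L^2(\bbR)$, whose spectral asymptotics are then extracted via a Birman--Solomyak type Weyl formula.

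\emph{Factorization and non-negativity.} Define $Q\colon L^2(\bbR_+)\to L^2(\bbR_+)$ by $Qf(\lambda)=w(\lambda)^{1/2}(\calL f)(\lambda)$, where $\calL$ is the Laplace transform. Interchanging the order of integration shows $Q^*Q=\bH(\bb_0)$, giving $\bH(\bb_0)\geq 0$ at once. Since $\calL^*=\calL$ and $\calL^2=\bH(1/x)$ is the Carleman operator (as in the proof of Lemma~\ref{lma.cc5}), the identity $A^*A\approx AA^*$ yields
$$
\bH(\bb_0)\approx QQ^*=w^{1/2}\,\bH(1/x)\,w^{1/2}.
$$

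\emph{Reduction to convolution form.} Next I would apply the unitary $U\colon L^2(\bbR,dt)\to L^2(\bbR_+,dx)$, $Uf(x)=x^{-1/2}f(-\log x)$. A straightforward computation shows that $U^*\bH(1/x)U$ is convolution on $L^2(\bbR)$ with $k(t)=1/(2\cosh(t/2))$, while $U^*w^{1/2}U$ is multiplication by $\omega(t):=w(e^{-t})^{1/2}$. From the structure of $w$, the function $\omega$ vanishes on $(-\infty,0]$, is bounded, and satisfies $\omega(t)=t^{-\alpha/2}$ for all sufficiently large $t$. Consequently,
$$
\bH(\bb_0)\approx L:=M_\omega K M_\omega\ \text{on}\ L^2(\bbR),\quad K=k\ast\,\cdot.
$$
Compactness of $L$ follows from the decay of $\omega$ at infinity together with the boundedness of $K$; non-negativity follows from $\wh k\geq 0$.

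\emph{Spectral asymptotics and the constant $\varkappa(\alpha)$.} For the sandwich $L$ the Birman--Solomyak Weyl asymptotic yields
$$
\#\{n:\lambda_n^+(L)>\lambda\}\sim\frac{1}{2\pi}\meas\bigl\{(t,\xi)\in\bbR^2:\omega(t)^2\wh k(\xi)>\lambda\bigr\},\quad\lambda\to 0^+.
$$
The principal contribution to the measure comes from large $t$, where $\omega(t)^2=t^{-\alpha}$ exactly, and is asymptotic to $\lambda^{-1/\alpha}\int_\bbR\wh k(\xi)^{1/\alpha}\,d\xi$. Since $\wh k$ has the form $\pi/\cosh(c\pi\xi)$ for an explicit $c$, the integral is computable in closed form via the classical identity $\int_\bbR(\cosh u)^{-s}\,du=B(s/2,1/2)$. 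Inverting the counting function then produces $\lambda_n^+(L)=\varkappa(\alpha)n^{-\alpha}+o(n^{-\alpha})$ with $\varkappa(\alpha)$ as in~\eqref{a1}.

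\emph{Main obstacle.} The chief technical difficulty is the rigorous invocation of the Birman--Solomyak asymptotic: the weight $\omega$ is not smooth --- it has a jump discontinuity at $t=0$ and only power-type regularity at the boundary of its support --- so the classical Weyl formula cannot be applied as a black box. My remedy would be to split $\omega=\omega_1+\omega_2$ into a compactly supported piece and a smoothly truncated scale-invariant tail $\omega_2(t)\approx t^{-\alpha/2}\mathbbm{1}_{\{t\geq T\}}$, to dispose of $M_{\omega_1}KM_{\omega_1}$ as a Schatten-class perturbation via Peller's criterion (Theorem~\ref{thm.b3}) combined with the stability Lemma~\ref{lma.b1}, and to analyse the model operator $M_{\omega_2}KM_{\omega_2}$ either by an exact Mellin-transform computation or by a known Weyl asymptotic for sandwiched convolutions with power weights. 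This approximation step, and not the closed-form evaluation of the constant, is where the bulk of the technical work lies.
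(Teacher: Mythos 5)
The result is cited from \cite[Lemma 3.2]{PY1} rather than reproved in this paper, so I am comparing against the known PY1 argument. Your proposal follows essentially the same route as PY1: factor $\bH(\bb_0)=Q^*Q$ with $Q=w^{1/2}\calL$ to get non-negativity and $\bH(\bb_0)\approx QQ^*=w^{1/2}\bH(1/x)w^{1/2}$, conjugate by the exponential substitution to arrive at the convolution sandwich $M_\omega K M_\omega$ on $L^2(\bbR)$ with $k(t)=1/(2\cosh(t/2))$ and $\omega(t)=w(e^{-t})^{1/2}$, and then extract the Weyl asymptotics of the sandwich à la Birman--Solomyak. You correctly identify where the technical work lies, namely in justifying the Weyl asymptotic for a weight that is neither smooth nor genuinely homogeneous; this is indeed the substance of the PY1 lemma.

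A few inaccuracies worth flagging. First, $\omega$ does not have a jump at $t=0$: since $\chi$ is smooth, non-negative and vanishes identically for $\lambda\ge1$, it vanishes to infinite order at $\lambda=1$, so $\omega(t)=t^{-\alpha/2}\chi(e^{-t})^{1/2}\to0$ as $t\to0^+$ and $\omega$ is in fact continuous; the genuine obstruction is lack of smoothness of $\chi^{1/2}$ and the non-homogeneity of the weight, not a jump. Second, be careful with the Fourier normalization: this paper uses $\wh f(\xi)=\int f(x)e^{-2\pi ix\xi}\,dx$, under which the phase-space volume formula has no $\tfrac{1}{2\pi}$ prefactor; keeping your $\tfrac{1}{2\pi}$ together with the value $\wh k(\xi)=\pi/\cosh(2\pi^2\xi)$ would yield $4^{-\alpha}\pi^{1-3\alpha}B(\tfrac{1}{2\alpha},\tfrac12)^\alpha$ instead of the correct $\varkappa(\alpha)=2^{-\alpha}\pi^{1-2\alpha}B(\tfrac{1}{2\alpha},\tfrac12)^\alpha$. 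Third, your proposed remedy for the compactly supported piece is slightly off target: after splitting $\omega=\omega_1+\omega_2$ you must also control the cross terms $M_{\omega_1}KM_{\omega_2}$ and $M_{\omega_2}KM_{\omega_1}$, and invoking Peller's criterion (which concerns Hankel operators) for $M_{\omega_1}KM_{\omega_1}$ is indirect --- a direct argument showing the localized piece lies in $\Sch_p$ for $p<1/\alpha$ (so that Lemma~\ref{lma.b1} applies) is cleaner. None of these affect the overall correctness of the approach, which matches PY1 in structure.
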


Finally, we will need a result from \cite{PY2} (which ultimately relies on Theorem~\ref{thm.b3}), 
which gives estimates on singular values for integral Hankel operators with kernels that
behave, roughly speaking, as $O(x^{-1}(\log x)^{-\gamma})$.  
For $\gamma>0$, denote
\begin{equation}
m(\gamma)=
\begin{cases}
[\gamma]+1& \text{ if } \gamma\geq1/2,
\\
0, & \text{ if } \gamma<1/2.
\end{cases}
\label{b7}
\end{equation}

\begin{theorem}\label{thm.b4}\cite[Theorem~2.7]{PY2}
Let $\gamma>0$ and  let $m=m(\gamma)$ be the integer given by \eqref{b7}.
Let $\bb$ be a complex valued function,
$\bb\in L^\infty_\loc(\bbR_+)$; if $\gamma\geq1/2$, suppose also
that $\bb\in C^m(\bbR_+)$.
Assume that $\bb$ satisfies
\[
\bb^{(\ell)}(x)=O(x^{-1-\ell}\abs{\log x}^{-\gamma})
\quad
\text{ as $x\to0$ and as $x\to\infty$,}
\label{b9}
\]
for all $\ell=0,\dots,m(\gamma)$.
Then
$$
s_n(\bH(\bb))=O(n^{-\gamma}), \quad n\to\infty.
$$
\end{theorem}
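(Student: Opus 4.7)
The plan is to combine Peller's Schatten class characterization (Theorem~\ref{thm.b3}) with a dyadic decomposition of the kernel $\bb$ and a quasi-linearization argument, in order to establish $\bH(\bb)\in\Sch_{1/\gamma,\infty}$---which is equivalent to the claimed $s_n(\bH(\bb))=O(n^{-\gamma})$.

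First I would use the dyadic partition of unity $\{\bw_n\}_{n\in\bbZ}$ from Section~\ref{SP estimate} to write $\bb_n:=\bb\bw_n$, so that $\bb=\sum_{n\in\bbZ}\bb_n$ with $\supp\bb_n\subset[2^{n-1},2^{n+1}]$. The assumption \eqref{b9} and the Leibniz rule yield, for $|n|$ large,
$$\|\bb_n^{(\ell)}\|_{L^\infty}\lesssim 2^{-n(1+\ell)}|n|^{-\gamma}, \quad \ell=0,\ldots,m(\gamma).$$
After rescaling $\wt\bb_n(y):=\bb_n(2^n y)$ to $[1/2,2]$ and applying a standard Fourier-side estimate---integration by parts $m(\gamma)$ times when $\gamma\ge 1/2$, or interpolation between $L^1$ and Plancherel's $L^2$ bound on $\bb_n$ when $\gamma<1/2$---one obtains
$$\|\wh\bb_n\|_{L^p(\bbR)}^p\lesssim 2^{-n}|n|^{-\gamma p}$$
for $p$ in an admissible range ($p\in(1/m(\gamma),\infty)$ when $\gamma\ge 1/2$; $p\in[2,\infty)$ when $\gamma<1/2$). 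Feeding this into \eqref{cb8} applied to the single piece $\bb_n$ gives $\|\bH(\bb_n)\|_{\Sch_p}\lesssim |n|^{-\gamma}$.

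Next comes the quasi-linearization step. I would pick $p$ in the admissible range with $p<1/\gamma$ (possible since $1/m(\gamma)<1/\gamma$ when $\gamma\ge 1/2$ and $2<1/\gamma$ when $\gamma<1/2$), and for each $T\ge 1$ split $\bH(\bb)=A_T+B_T$ with $A_T:=\sum_{|n|\le T}\bH(\bb_n)$. The quasi-triangle inequality \eqref{a10} then gives $\|A_T\|_{\Sch_p}\lesssim T^{1/p-\gamma}$. For $B_T=\bH(\sum_{|n|>T}\bb_n)$, I would invoke the Carleman-type operator bound $\|\bH(\bg)\|\le\pi\|x\bg(x)\|_{L^\infty}$, which follows from the pointwise domination $|\bH(\bg)f|\le\bH(|\bg|)|f|$ and the fact that the Carleman operator $\bH(1/x)$ has norm $\pi$; since $|x\bb(x)|\lesssim|\log x|^{-\gamma}$ on the set $\{|\log x|\gtrsim T\}$, this yields $\|B_T\|\lesssim T^{-\gamma}$. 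Combining via $s_N(A_T)\le N^{-1/p}\|A_T\|_{\Sch_p}$ and Ky Fan's inequality $s_N(A_T+B_T)\le s_N(A_T)+\|B_T\|$, then setting $T=N$, gives
$$s_N(\bH(\bb))\lesssim N^{-1/p}\cdot N^{1/p-\gamma}+N^{-\gamma}\lesssim N^{-\gamma},$$
as required.

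The hard part is pinning down the Carleman-type operator norm bound on $B_T$; although elementary, this estimate is what allows the per-piece $\Sch_p$ bounds to assemble into the sharp weak-type $\Sch_{1/\gamma,\infty}$ conclusion, since the naive sum $\sum_n\|\bH(\bb_n)\|_{\Sch_p}^p$ converges only for $p>1/\gamma$ and therefore gives merely $s_n=o(n^{-\gamma+\eps})$ for all $\eps>0$. A secondary subtlety is the case $\gamma<1/2$, where $m(\gamma)=0$ removes all derivative information from \eqref{b9}, forcing the Plancherel-based $L^2$--$L^\infty$ interpolation bound on $\wh\bb_n$ in place of integration by parts.
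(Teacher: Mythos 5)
The paper itself does not prove this theorem --- it imports it from \cite{PY2}, where the argument runs through a weak-type ($\Sch_{p,\infty}$) version of Peller's criterion, obtained by real interpolation and applied with $p=1/\gamma$ exactly. Your route --- per-piece $\Sch_p$ bounds for some $p<1/\gamma$, followed by a hand-made quasi-linearization $\bH(\bb)=A_T+B_T$ with an operator-norm bound on the tail --- is a legitimate self-contained alternative, and most of its ingredients are sound: the Leibniz/rescaling estimates giving $\norm{\wh\bb_n}_{L^p}^p\lesssim 2^{-n}|n|^{-\gamma p}$, the Carleman-type bound $\norm{\bH(\bg)}\le\pi\sup_x|x\bg(x)|$ for the tail, and the Ky Fan assembly with $T=N$ are all correct.

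There is, however, a genuine gap at the step ``the quasi-triangle inequality \eqref{a10} gives $\norm{A_T}_{\Sch_p}\lesssim T^{1/p-\gamma}$''. Inequality \eqref{a10} holds only for $p\le1$, whereas your admissible range forces $p>1$ whenever $\gamma<1$: for $1/2\le\gamma<1$ one has $m(\gamma)=1$, so the Fourier decay $(1+|\xi|)^{-1}$ places $\wh{\wt\bb_n}$ in $L^p$ only for $p>1$, and for $\gamma<1/2$ you take $p\ge2$. For $p>1$ the ordinary triangle inequality yields only $\norm{A_T}_{\Sch_p}\lesssim T^{1-\gamma}$, hence $s_N(A_N)\lesssim N^{1-1/p-\gamma}$, which is not $O(N^{-\gamma})$. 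The bound you want is nonetheless true, but for a different reason: apply the upper estimate in \eqref{cb8} directly to the truncated kernel $\sum_{|n|\le T}\bb_n$. Its dyadic pieces coincide with $\bb_k$ for $|k|\le T-1$, vanish for $|k|\ge T+2$, and satisfy the same bounds at the two boundary indices, so \eqref{cb8} gives $\norm{A_T}_{\Sch_p}^p\le C_p\sum_{|k|\le T+1}2^k\norm{\wh\bb_k}_{L^p}^p\lesssim T^{1-\gamma p}$ for every admissible $p$. (This ``almost orthogonality'' of the dyadic pieces in $\Sch_p$ for all $p>0$ is exactly the content of Peller's two-sided estimate; it is not a consequence of \eqref{a10}.) With that replacement your argument closes for all $\gamma>0$; as written it is complete only for $\gamma\ge1$, where $m(\gamma)\ge2$ permits $p<1$ --- which, incidentally, covers the only case used in this paper, namely $\gamma=\alpha+1>1$.
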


\subsection{Proof of Theorem~\ref{thm.a1}}

We use the notation of Section~\ref{sec.a5}.
More precisely, $\ba(t)$ is a smooth function that satisfies \eqref{a9} for large $t$ and
$\bb(x)$ is the corresponding Hankel kernel function \eqref{a8}.
Further,
$\chi\in C^\infty(\bbR_+)$ is a non-negative function such that
$\chi(\lambda)=1$ for all sufficiently small $\lambda>0$ and $\chi(\lambda)=0$
for $\lambda\geq1$, and $w(\lambda)=\abs{\log\lambda}^{-\alpha}\chi(\lambda)$.
The kernel functions $\ba_0$ and $\ba_1$ are given by \eqref{aa8} and the corresponding
Hankel kernels $\bb_0$, $\bb_1$ are given by \eqref{aa10}; finally, $a_0=r(\ba_0)$ and $a_1=r(\ba_1)$. 
Recall that we have 
$$
M(a)=M(a_0)+M(a_1).
$$

1) Let us prove that the Helson matrix $M(a_0)$ is compact, non-negative and
has the spectral asymptotics
\[
\lambda_n^+(M(a_0))=\varkappa(\alpha)n^{-\alpha}+o(n^{-\alpha}), \quad n\to\infty.
\label{dd0}
\]
Lemma~\ref{lma.bb4} provides the asymptotics of the required type for $\bH(\bb_0)$. 
By the unitary equivalence between $\bM(\ba_0)$ and $\bH(\bb_0)$, we have
$$
\lambda_n^+(\bM(\ba_0))=\lambda_n^+(\bH(\bb_0))
$$
for all $n$, and so $\bM(\ba_0)$ obeys the same spectral asymptotics. 
Finally, we use Theorem~\ref{thm.cc1} with $a=a_0$ and $\ba=\ba_0$.
By the unitary equivalence modulo kernels, we have
$$
\lambda_n^+(M(a_0))=\lambda_n^+(A),
\quad
\lambda_n^+(\bM(\ba_0))=\lambda_n^+(B),
$$
for all $n$, where $A$ and $B$ are the operators in the statement of Theorem~\ref{thm.cc1}. 
Now since $A-B\in\Sch_0$, by Lemma~\ref{lma.b1}, we have
$$
\limsup_{n\to\infty}n^\alpha\lambda_n^+(A)
=
\limsup_{n\to\infty}n^\alpha\lambda_n^+(B)
$$
for all $\alpha>0$, and similarly for $\liminf$.
This gives the required asymptotics for $\lambda_n^+(A)$ and so for $\lambda_n^+(M(a_0))$. 
The non-negativity $M(a_0)\geq0$ is given again by Theorem~\ref{thm.cc1}. 

2) Let us prove that 
the Helson matrix $M(a_1)$ is compact and satisfies the spectral estimates
\[
s_n(M(a_1))=O(n^{-\alpha-1}), \quad n\to\infty.
\label{dd1}
\]
By the choice of $\ba_1$, we have that $\bb_1$ is smooth on $[0,\infty)$  and
\[
\bb_1(x)=x^{-1}(\log x)^{-\alpha}-\int_0^\infty e^{-\lambda x}w(\lambda)d\lambda
\label{d1}
\]
for all sufficiently large $x$.
Let us check that $\bb_1$ satisfies the hypothesis of Theorem~\ref{thm.b4} with $\gamma=\alpha+1$.
Since $\bb_1$ is smooth near $x=0$, we only need to check \eqref{b9} for $x\to\infty$.

We use the following well known fact \cite{Erdelyi}.
Let $0<c<1$, $\ell\in\bbZ_+$, and
$$
I_\ell(x)= \int_{0}^c \abs{\log \lambda}^{-\gamma}\lambda^{\ell} e^{-\lambda x}d\lambda, \quad x>0.
$$
Then
$$
I_\ell(x)=\ell!\, x^{-1-\ell} (\log x)^{-\alpha} \bigl(1+O((\log x)^{-1})\bigr),
\quad x\to\infty.
$$
Now differentiating \eqref{d1} $\ell$ times, we obtain
\begin{multline*}
\bb_1^{(\ell)}(x)
=
(-1)^\ell
\ell!\, x^{-1-\ell} (\log x)^{-\alpha}
+O(x^{-1-\ell}(\log x)^{-\alpha-1})
\\
-
(-1)^\ell
\int_0^\infty e^{-\lambda x}\lambda^{\ell}w(\lambda)d\lambda
=
O(x^{-1-\ell}(\log x)^{-\alpha-1}),
\quad
x\to\infty,
\end{multline*}
for all $\ell\geq0$, which gives the required estimate \eqref{b9} with $\gamma=\alpha+1$.

Thus, Theorem~\ref{thm.b4} yields the inclusion $\bH(\bb_1)\in\Sch_{p,\infty}$ with $p=1/(\alpha+1)$.
By the unitary equivalence \eqref{aa9}, it follows that $\bM(\ba_1)\in\Sch_{p,\infty}$.
Applying Theorem~\ref{thm.a2}(ii), we obtain $M(a_1)\in\Sch_{p,\infty}$,
which is equivalent to the required estimate \eqref{dd1}.

3)
Now we can conclude the proof of the theorem. 
Let us apply the asymptotic stability result Lemma~\ref{lma.b1}
with $A=M(a_0)$ and $B=M(a_1)$.
By \eqref{dd0}, this gives the asymptotics \eqref{a2} for positive eigenvalues.

Let us discuss the estimate \eqref{a2} for negative eigenvalues.
By \eqref{dd1}, we have
\[
\lambda_n^-(M(a_1))=O(n^{-\alpha-1}),\quad n\to\infty.
\label{e1}
\]
Since $M(a_0)\geq0$, by the variational principle (see e.g. \cite[Theorem 9.3.7]{BSbook}), we obtain
$$
\#\{n: \lambda_n^-(M(a))>\lambda\}
\leq
\#\{n: \lambda_n^-(M(a_1))>\lambda\}
$$
for any $\lambda>0$, which implies
$$
\lambda_n^-(M(a))\leq \lambda_n^-(M(a_1))
$$
for any $n$. 
From here and \eqref{e1} we obtain the estimate \eqref{a2} for negative eigenvalues.
\qed

\end{document}